\newcommand{\ZZ}{\mathbb{Z}}
\newcommand{\FF}{\mathbb{F}}
\newcommand{\CC}{\mathbb{C}}
\newcommand{\QQ}{\mathbb{Q}}
\newcommand{\RR}{\mathbb{R}}
\newcommand{\NN}{\mathbb{N}}
\newcommand{\TT}{\mathbb{T}}
\DeclareSymbolFont{cyrletters}{OT2}{wncyr}{m}{n}
\DeclareMathSymbol{\Sha}{\mathalpha}{cyrletters}{"58}
\newtheorem{Theorem}{Theorem}
\newtheorem{Lemma}[Theorem]{Lemma}
\newtheorem{Proposition}[Theorem]{Proposition}
\newtheorem{Corollary}[Theorem]{Corollary}
\newtheorem{Definition}[Theorem]{Definition}
\newtheorem{Remark}[Theorem]{Remark}
\date{January, 2016}
\title[Multiple zeta values]{A note on multiple zeta values in Tate algebras}
\author{F. Pellarin}
\address{Federico Pellarin: Institut Camille Jordan, UMR 5208 Site de Saint-Etienne, 23 rue du Dr. P. Michelon, 42023 Saint-Etienne,
France}
\email{federico.pellarin@univ-st-etienne.fr}
\keywords{Multiple zeta values, Carlitz module, $A$-harmonic sums}
\subjclass[2010]{11M38 (primary)} 
\begin{document}

\maketitle

\begin{abstract}

In this note, we shall discuss a generalization of Thakur's multiple zeta values
and allied objects, in the framework of function fields of positive characteristic and more precisely,
of periods in Tate algebras.

\end{abstract}

\section{Introduction}
Let $A=\FF_q[\theta]$ be the ring of polynomials in an indeterminate $\theta$  with coefficients in $\FF_q$ the finite field with 
$q$ elements and characteristic $p$, let $K$ be the fraction field of $A$ and 
$K_\infty$ the completion of $K$ at the infinity place $\infty$.  
For $d\geq 0$ an integer, we denote by $A^+(d)$ the set of monic polynomials of $A$
of degree $d$. Carlitz studied, in \cite{CAR}, the so-called {\em Carlitz zeta values}:
$$\zeta_C(n):=\sum_{a\in A^+}a^{-n}\in K_\infty,\quad n\geq 1.$$
It is likely that the formal analogy of these objects with the {\em classical zeta values} $$\zeta(n)=\sum_{i\geq 1}i^{-n}\in\RR$$ with $n$ 
integer (convergence occurs only if $n\geq 2$) was the main motivation for his study. 
In a more modern approach, we can say that Carlitz suggested, with his first pioneering papers, to develop an arithmetic theory 
of {\em periods} over the ring $\FF_q[\theta]$ in parallel with the study of the arithmetic
theory of periods over $\ZZ$. 

In all the following, if $R$ is a ring, $R^\times$ denotes the group of 
the multiplicative invertible elements of $R$.
It was proved by Carlitz in \cite{CAR} that, if $n\equiv0\pmod{q-1}$, 
\begin{equation}\label{carlitzresultpi}\zeta_C(n)\in K^\times\widetilde{\pi}^n,\end{equation}
where $\widetilde{\pi}$ is the value in $\CC_\infty=\widehat{K_\infty^{ac}}$ (\footnote{This is the completion of 
an algebraic closure of $K_\infty$.}) of a convergent infinite product
\begin{equation}\label{pi}
\widetilde{\pi}:=-(-\theta)^{\frac{q}{q-1}}\prod_{i=1}^\infty(1-\theta^{1-q^i})^{-1}\in (-\theta)^{\frac{1}{q-1}}K_\infty,
\end{equation}
uniquely defined up to the multiplication by an element of $\FF_q^\times=\FF_q\setminus\{0\}$
(corresponding to the choice of a root $(-\theta)^{\frac{1}{q-1}}$). It has been proved in a variety of
ways (see \cite{PEL0} to see the most relevant ones) that $\widetilde{\pi}$ is moreover transcendental over
$K$.

The element $\widetilde{\pi}$ is a fundamental period of the {\em Carlitz exponential} $\exp_C$  (Goss, \cite[\S 3.2]{GOS}), that is, the unique  
surjective, entire, $\FF_q$-linear  function $$\exp_C:\CC_\infty\rightarrow\CC_\infty$$ of kernel $\widetilde{\pi}\FF_q[\theta]$ such that its first derivative satisfies $\exp_C'=1$ (note that, since we are in a
characteristic $p>0$ environment, a function with constant derivative is not necessarily $\CC_\infty$-linear).

In his book \cite[\S 5.10]{THA}, Thakur also consider several variants of classical multiple zeta values in the 
context of the Carlitzian arithmetic over the ring $A$. We mention here what we think is the 
most relevant.
For $n_1,\ldots,n_r\in\ZZ_{\geq 1}$, Thakur defines, as one of the analogues
of the classical multiple zeta values in the Carlitzian setting:
\begin{equation}\label{defmultizeta}
\zeta_C(n_1,\ldots,n_r)=\sum_{a_i\in A^+ \atop |a_1|>\cdots>|a_r|}\frac{1}{a_1^{n_1}\cdots a_r^{n_r}}\in K_\infty.
\end{equation} Here, for $x\in\CC_\infty^\times$, we write $|x|=q^{-v_\infty(x)}$
where $v_\infty$ is the valuation of $\CC_\infty$ (so that $v_\infty(\theta)=-1$) and we define $|0|:=0$. If $r=0$ we further set the 
corresponding Thakur multiple zeta value $\zeta_C(\emptyset)$ to be equal to $1$.

Classically, one of the reasons we could get interested
in multiple zeta values is the need of "enveloping" zeta values in the "simplest" $\QQ$-algebra
possible. From Euler, it is well known that the zeta values $\zeta(2),\zeta(4),\ldots$ all belong to the
$\QQ$-algebra $\QQ[\zeta(2)]$. In general, the other zeta values 
are not expected to belong to this algebra. However, they belong to the $\QQ$-algebra $Z_\RR\subset\RR$ generated by the multiple zeta values. It is known that the product of two multiple zeta values is a
$\QQ$-linear combination of multiple zeta values, and this algebra also has a more natural structure. 

We expect that $Z_\RR$
is isomorphic to the algebra $\QQ\langle f_3,f_5,\ldots\rangle_\Sha\otimes_\QQ\QQ[\zeta(2)]$,
where $\QQ\langle f_3,f_5,\ldots\rangle_\Sha$ is the $\QQ$-algebra generated
by the non-commutative words in the alphabet with letters $f_3,f_5\ldots$ with, as a product, the
shuffle product $\small{\Sha}$ (see Brown's \cite{BRO}). A folklore conjecture comes in support of this guess;
the number $\pi$ and the zeta values $\zeta(3),\zeta(5),\ldots$ are expected to be 
algebraically independent over $\QQ$. Multiple zeta values are thus expected to 
provide a natural basis of this $\QQ$-algebra. See also \cite{KAN}
for the definition of a $\QQ$-algebra of {\em finite multi-zeta values} which could 
offer a nice realization of the algebra $\QQ\langle f_3,f_5,\ldots\rangle_\Sha$. 

Similarly, in the Carlitzian setting we note that, after (\ref{carlitzresultpi}), 
the values $\zeta_C(n)$ with $n>0$ divisible by $q-1$ are all contained in the 
$K$-algebra $K[\zeta_C(q-1)]$, which is isomorphic to $K[X]$ for an indeterminate $X$.
However, the remaining Carlitz zeta values $\zeta_C(1),\ldots$ do not belong 
to this algebra (if $q>2$). Indeed, Chang and Yu proved in \cite{CHA&YU} that
$\widetilde{\pi}$ and the Carlitz zeta values $\zeta_C(n)$ with $n\geq 1$, $q-1\nmid n$
and $p \nmid n$ with $p$ the prime number dividing $q$ are algebraically independent
(we recall that these authors, in ibid., use the powerful algebraic independence 
methods introduced by Papanikolas in \cite{PAP}); see also \cite{CHA1,CHA2,CHA&PAP&YU}.

Just as for the algebra $Z_\RR$, Thakur proved in \cite{THA2} that the 
product of two multiple zeta values as in (\ref{defmultizeta}) is a linear combination
(this time with coefficients in $\FF_p$) of such multiple zeta values. Thakur also mentioned to 
the author of the present note that G. Todd's numerical computations have led to a good 
understanding of (conjectural) relations among Thakur's multiple zeta values; the relations are universal in a sense that is described in ibid. 
Compared to the classical setting, the difficulty here is to handle the product of
the so-called {\em power sums} (see later).
We denote by $Z_{K_\infty}$
the $K$-sub-algebra of $K_\infty$ generated by the multiple zeta values (\ref{defmultizeta});
even conjecturally, in spite of the striking results of algebraic independence
mentioned above, we know very little about the structure of this algebra.
In particular, we presently do not know what could be the analogue structure 
which could play the role of the algebra $\QQ\langle f_3,f_5,\ldots\rangle_\Sha$
in this setting.

In this note, we shall discuss of a generalization of the Thakur multiple zeta values which,
 so far, has no counterpart in the classical setting. For this purpose, we 
note that  $A$ is an algebra over $\FF_q$
($\ZZ$ is not an algebra over a field). Therefore, a series of advantages occurs 
in the Carlitzian framework, notably the possibility to use the tensor product over $\FF_q$.
We consider variables $t_1,\ldots,t_s$ over $K$ and we write $\underline{t}_s$
for the family of variables $(t_1,\ldots,t_s)$. We denote by $\boldsymbol{F}_s$ the field $\FF_q(\underline{t}_s)$,
so that $\boldsymbol{F}_0=\FF_q$.

In all the following, if $R$ is a ring, we denote by $R^*$ the underlying multiplicative monoid
(inclusive of the element $0$).
Note that $A^+$ is a multiplicative sub-monoid of $A^*$. We denote by $\FF_q^{ac}$
the algebraic closure of $\FF_q$ in $\CC_\infty$.

\begin{Definition}
{\em A monoid homomorphism $\sigma:A^+\rightarrow(\FF_q^{ac}\otimes_{\FF_q}\boldsymbol{F}_s)^*$ is called a
{\em semi-character}. The {\em trivial semi-character} 
is the map $\boldsymbol{1}:A^+\rightarrow\{1\}$. 
Let $\sigma$ be a semi-character. We say that it is {\em of Dirichlet type}
if there exist $\FF_q$-algebra homomorphisms
$$\rho_i:A\rightarrow\FF_q^{ac}\otimes_{\FF_q}\boldsymbol{F}_s,\quad i=1,\ldots,s,$$
such that $\sigma(a)=\rho_1(a)\cdots\rho_s(a)$ for all $a\in A^+$. The integer $s$ is called the
{\em length}.
By convention, the semi-character $\boldsymbol{1}$ is the unique semi-character of 
Dirichlet type of length $0$.}\end{Definition}
For example, setting $t=t_1$, the map $\chi_t:A^+\rightarrow\FF_q[t]^*\subset(\FF_q^{ac}\otimes_{\FF_q}\boldsymbol{F}_s)^*$ defined by $\chi_t(a)=a(t)$
(\footnote{The "evaluation at $\theta=t$", in other words, the map 
which sends a polynomial $a=a(\theta)=a_0+a_1\theta+\cdots+a_r\theta^r$ with the coefficients $a_0,\ldots,a_r\in\FF_q$
to the polynomial $a(t)=a_0+a_1t+\cdots+a_rt^r\in\FF_q[\theta]$.}) is a semi-character of Dirichlet 
type.
Let $\zeta$ be an element of $\FF_q^{ac}$. The map $a\in A^+\mapsto \chi_{\zeta}(a)=a(\zeta)\in\FF_q^{ac}
\subset(\FF_q^{ac}\otimes_{\FF_q}\boldsymbol{F}_s)^*$
is also a semi-character of Dirichlet type, and the same can be said if 
we now pick elements $\zeta_1,\ldots,\zeta_s\in\FF_q^{ac}$ and consider the map
$\chi_{\underline{\zeta}}:a\mapsto\chi_{\zeta_1}(a)\cdots\chi_{\zeta_s}(a)$ (this is more commonly called
a ``Dirichlet character").
The map $a\in A^+\mapsto \FF_q[t]^*$
which sends $a$ to $t^{\deg_\theta(a)}$ is a semi-character, but it can be proved that 
it is not of Dirichlet type.

\begin{Definition}\label{defpowersums}{\em 
Let $\sigma:A^+\rightarrow(\FF_q^{ac}\otimes_{\FF_q}\boldsymbol{F}_s)^*$ be a semi-character.
The associated {\em twisted power sum} of order $k$ and degree $d$ is the sum:
$$S_d(k;\sigma)=\sum_{a\in A^+(d)}a^{-k}\sigma(a)\in \FF_q^{ac}\otimes_{\FF_q}K(\underline{t}_s).$$
More generally, let $\sigma_1,\ldots,\sigma_r$ be 
semi-characters, let $n_1,\ldots,n_r$ be integers, and $d$ a non-negative integer. The associated
{\em multiple twisted power sum} of degree $d$
is the sum
$$S_d\left(\begin{matrix}\sigma_1 & \sigma_2 & \cdots & \sigma_r\\
n_1 & n_2 & \cdots & n_r\end{matrix}\right)=
S_{d}(n_1;\sigma_1)\sum_{d> i_2> \cdots> i_r\geq 0}
S_{i_2}(n_2;\sigma_2)\cdots S_{i_r}(n_r;\sigma_r)\in \FF_q^{ac}\otimes_{\FF_q}K(\underline{t}_s).$$
The integer $\sum_in_i$ is called the 
{\em weight} and the integer $r$ is called its {\em depth}.}\end{Definition}
We can write in both ways $S_d(n;\sigma)=S_d\binom{\sigma}{n}$. Observe also that
$$S_d\left(\begin{matrix}\boldsymbol{1} & \boldsymbol{1} & \cdots & \boldsymbol{1}\\
n_1 & n_2 & \cdots & n_r\end{matrix}\right)=S_d(n_1,n_2,\ldots,n_r)\in K,$$
in the notations of Thakur in \cite[\S 1.2]{THA2}. We hope that all these slightly different
notations will not bother the reader.

\begin{Definition}\label{definitionmzv}{\em With $n_1,\ldots,n_r\geq 1$ and semi-characters $\sigma_1,\ldots,\sigma_r$
as above, we introduce the associated {\em multiple zeta value}
$$\zeta_C\left(\begin{matrix}\sigma_1 & \sigma_2 & \cdots & \sigma_r\\
n_1 & n_2 & \cdots & n_r\end{matrix}\right):=\sum_{d\geq 0}S_d\left(\begin{matrix}\sigma_1 & \sigma_2 & \cdots & \sigma_r\\
n_1 & n_2 & \cdots & n_r\end{matrix}\right)\in \widehat{K_\infty\otimes_{\FF_q}\FF_q^{ac}\otimes_{\FF_q}\boldsymbol{F}_s}.$$ The sum converges in the completion of the field 
$K_\infty\otimes_{\FF_q}\FF_q^{ac}\otimes_{\FF_q}\boldsymbol{F}_s$ with respect to the unique 
valuation extending the $\infty$-adic valuation of $K_\infty$ and inducing the trivial valuation over $\FF_q^{ac}\otimes_{\FF_q}\boldsymbol{F}_s$.
We will say that this is the multiple zeta value associated to the 
matrix data $$\left[\begin{matrix}\sigma_1 & \sigma_2 & \cdots & \sigma_r\\
n_1 & n_2 & \cdots & n_r\end{matrix}\right].$$ The integer $\sum_in_i$ is called the 
{\em weight} of the above multiple zeta value and the integer $r$ is called its {\em depth}.
If all the semi-characters $\sigma_1,\ldots,\sigma_r$ are of Dirichlet type, then, 
for all $1\leq i\leq r$, $\sigma_i=\rho_{i,1}\cdots\rho_{i,n_i}$ for ring homomorphisms
$\rho_{i,j}$. Then, we say that the multiple zeta value associated to the above matrix data is {\em of Dirichlet type}, and the cardinality of the set $\{\rho_{i,j};i,j\}$ is called its {\em length}.
}\end{Definition}
Again note that if $\sigma_1=\cdots=\sigma_r=\boldsymbol{1}$, then, we can write
$$\zeta_C\left(\begin{matrix}\sigma_1 & \sigma_2 & \cdots & \sigma_r\\
n_1 & n_2 & \cdots & n_r\end{matrix}\right)=\zeta_C(n_1,\ldots,n_r)\in K_\infty$$ with 
$\zeta_C(n_1,\ldots,n_r)$ as in (\ref{defmultizeta}) (of Dirichlet type, depth $r$ and length $0$). 
Further, let us assume that $r=1,n=n_1>0$ and that $\sigma=\chi_{t_1}\cdots\chi_{t_s}$.
Then, we have that
$$\zeta_C(n;\sigma)=\zeta_C\binom{\sigma}{n}=\sum_{a\in A^+}\frac{a(t_1)\cdots a(t_s)}{a^n}=\prod_P\left(1-\frac{P(t_1)\cdots P(t_s)}{P^n}\right)^{-1}\in\TT_s^\times.$$
These series have been introduced in \cite{PEL}
and extensively studied in \cite{ANG&PEL,ANG&PEL2,APTR}.
The product runs over the irreducible polynomials of $A^+$
and the convergence holds in the standard {\em $s$-dimensional Tate algebra},
which can be identified with the $\CC_\infty$-algebra of the rigid analytic functions
$B(0,1)^s\rightarrow\CC_\infty$, where $B(0,1)=\{z\in\CC_\infty;|z|\leq 1\}$. In fact,
these functions extend to entire functions $\CC_\infty^s\rightarrow\CC_\infty$
(see \cite[Corollary 8]{ANG&PEL}). More generally, if 
$\sigma_1,\ldots,\sigma_r$ are semi-characters of Dirichlet type
constructed as monomials in the ring homomorphisms $\chi_{t_1},\ldots,\chi_{t_s}$ (including the 
trivial semi-character), the multiple zeta value 
\begin{equation}\label{specimen}\zeta_C\left(\begin{matrix}\sigma_1 & \sigma_2 & \cdots & \sigma_r\\
n_1 & n_2 & \cdots & n_r\end{matrix}\right)\end{equation} belongs to $\TT_s$.

The following Proposition is easy to prove but the proof will appear elsewhere.

\begin{Proposition}\label{entireness}
With the above assumption over the semi-characters $\sigma_1,\ldots,\sigma_r$, the multiple zeta value
(\ref{specimen}),
hence of Dirichlet type and of length $\leq s$, extends to an entire function $\CC_\infty^s\rightarrow\CC_s$.
\end{Proposition}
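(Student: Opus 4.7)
The plan is to proceed by induction on the depth $r$, using \cite[Corollary 8]{ANG&PEL} as the base case. For $r=0$ the multiple zeta value equals $1$; for $r=1$ the cited corollary is precisely the statement that $\zeta_C(n;\sigma)=\sum_{d\geq 0}S_d(n;\sigma)$, with $\sigma$ a monomial in $\chi_{t_1},\ldots,\chi_{t_s}$, defines an entire function on $\CC_\infty^s$. By the ultrametric property, entireness is equivalent to uniform convergence on each closed polydisc $\{|t_i|\leq\rho\}$, which in turn forces the Gauss-norm estimate $|S_d(n;\sigma)|_\rho\to 0$ as $d\to\infty$ for every $\rho>0$. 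This second fact will be used repeatedly.

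For the inductive step, assume the conclusion at depth $r-1$ and set
$$T_d:=\sum_{d>i_2>\cdots>i_r\geq 0}S_{i_2}(n_2;\sigma_2)\cdots S_{i_r}(n_r;\sigma_r),$$
so that the multiple zeta value of interest is exactly $\sum_{d\geq 0}S_d(n_1;\sigma_1)T_d$. The crucial observation is that the $T_d$'s are precisely the partial sums (truncated at $i_2<d$) of the series defining the depth $(r-1)$ multiple zeta value
$$T_\infty:=\zeta_C\binom{\sigma_2\;\cdots\;\sigma_r}{n_2\;\cdots\;n_r};$$
hence by the induction hypothesis $T_\infty$ is an entire function, and on each polydisc of radius $\rho$ the convergence $T_d\to T_\infty$ is uniform, i.e.\ $|T_\infty-T_d|_\rho\to 0$.

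The ultrametric inequality then yields a uniform bound $|T_d|_\rho\leq M(\rho)$ independent of $d$. Combining with the base-case estimate $|S_d(n_1;\sigma_1)|_\rho\to 0$ gives $|S_d(n_1;\sigma_1)\,T_d|_\rho\to 0$, so by the non-archimedean convergence criterion the series $\sum_d S_d(n_1;\sigma_1)T_d$ converges uniformly on every polydisc, hence defines an entire function $\CC_\infty^s\to\CC_\infty$, completing the induction.

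The main delicate point is obtaining the uniform bound on $|T_d|_\rho$: it is not enough to know that $T_\infty$ is well-defined pointwise, one needs genuine uniform convergence of the defining series on polydiscs. This is exactly the depth $(r-1)$ entireness that the induction hypothesis supplies, and is the reason the argument cannot be reduced to a single direct application of \cite[Corollary 8]{ANG&PEL}. All remaining ingredients are routine manipulations with the Gauss norm on Tate algebras.
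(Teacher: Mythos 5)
The paper itself does not prove this Proposition (it states that ``the proof will appear elsewhere''), so there is no official argument to compare against; your induction on the depth $r$, with \cite[Corollary 8]{ANG\&PEL} as base case and Gauss-norm estimates driving the inductive step, is certainly the natural strategy, and the skeleton (write the depth-$r$ series as $\sum_d S_d(n_1;\sigma_1)T_d$ with $T_d$ the truncation of the depth-$(r-1)$ series, bound $|T_d|_\rho$ uniformly, and use $|S_d(n_1;\sigma_1)|_\rho\to 0$ together with multiplicativity of the Gauss norm) is sound.

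There is, however, one genuine logical gap, precisely at the point you yourself identify as delicate. You deduce $|T_\infty - T_d|_\rho\to 0$ for every $\rho$ from the inductive hypothesis that ``$T_\infty$ is an entire function.'' This inference is false in general: convergence of a series of polynomials in $\TT_s=\TT_s(1)$ to a limit which happens to be entire does \emph{not} imply convergence in $\TT_s(\rho)$ for $\rho>1$. For instance, with $P_d=\theta^{-d}t^d-\theta^{-(d+1)}t^{d+1}$ one has $|P_d|_1\to 0$, the partial sums tend to the entire function $1$ in $\TT_1$, yet $|P_d|_\rho\to\infty$ for $\rho>q$. So entireness of the limit object is strictly weaker than the uniform-on-polydiscs convergence of its defining partial sums that your argument consumes. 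The repair is standard but must be made explicit: run the induction on the stronger statement that the defining series converges in $\TT_s(\rho)$ for \emph{every} $\rho\geq 1$, i.e.\ that the general term of the depth-$k$ series tends to $0$ in every Gauss norm $|\cdot|_\rho$ (entireness of the sum is then a corollary of this, not the other way around). This stronger form is what \cite[Corollary 8]{ANG\&PEL} actually delivers in depth one, since the proof there estimates the Gauss norms of the power sums $S_d(n;\sigma)$ directly; with the inductive claim so strengthened, your boundedness of $|T_d|_\rho$ and the product estimate $|S_d(n_1;\sigma_1)T_d|_\rho\leq M(\rho)\,|S_d(n_1;\sigma_1)|_\rho\to 0$ go through and the argument closes.
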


It is presently a work in progress of the author 
to show that the product of two multiple zeta values as in Definition \ref{definitionmzv} is a linear combination, with coefficients in the field $K\otimes_{\FF_q}\FF_q^{ac}\otimes_{\FF_q}\boldsymbol{F}_s$,
of such multiple zeta values (with the various matrices of associated data not including, necessarily, the same semi-characters). We hope this will allow us to exhibit new multiple zeta values algebras
$Z_{\widehat{\FF_q^{ac}((\theta^{-1}))\otimes_{\FF_q}\boldsymbol{F}_s}}$ containing the 
algebra $Z_{K_\infty}$ and collecting the algebraic relations of $Z_{K_\infty}$ in families by specialization.

\section{Content of the present note}

Waiting for more general results, in this note we will accomplish a more modest 
objective, as we will only give a few explicit examples of shuffle products of such multiple zeta values in the 
following case: $s=2$, weight $\leq 2$, and the semi-characters $\boldsymbol{1},\sigma,\psi$ and $\sigma\psi$
of Dirichlet type,
where
$$\sigma:a\mapsto a(t_1)\in\FF_q[t_1,t_2],\quad \psi:a\mapsto a(t_2)\in\FF_q[t_1,t_2],\quad (\sigma\psi)(a)=\sigma(a)\psi(a)=a(t_1)a(t_2).$$ As an advantage of our explicit and restrictive viewpoint, we will see beautiful formulas dropping out from this 
new theory that we will apply to some new properties of the so-called ``Bernoulli-Goss" polynomials.

The matrix data we are going to handle are:
\subsubsection*{Four in weight 1}
$$\left[\begin{matrix}{c}\boldsymbol{1}\\
1\end{matrix}\right],\left[\begin{matrix}{c}\sigma\\
1\end{matrix}\right],\left[\begin{matrix}{c}\psi\\
1\end{matrix}\right],\left[\begin{matrix}{c}\sigma\psi\\
1\end{matrix}\right].$$
\subsubsection*{Four in weight 2 depth 1}
$$\left[\begin{matrix}{c}\boldsymbol{1}\\
2\end{matrix}\right],\left[\begin{matrix}{c}\sigma\\
2\end{matrix}\right],\left[\begin{matrix}{c}\psi\\
2\end{matrix}\right],\left[\begin{matrix}{c}\sigma\psi\\
2\end{matrix}\right].$$
\subsubsection*{Nine in weight 2 depth 2}
$$\left[\begin{matrix}\boldsymbol{1} & \boldsymbol{1}\\
1 & 1\end{matrix}\right],\left[\begin{matrix}\sigma & \boldsymbol{1}\\
1 & 1\end{matrix}\right],\left[\begin{matrix}\boldsymbol{1} & \sigma\\
1 & 1\end{matrix}\right],\left[\begin{matrix}\psi & \boldsymbol{1}\\
1 & 1\end{matrix}\right],\left[\begin{matrix}\boldsymbol{1} & \psi\\
1 & 1\end{matrix}\right],$$
$$\left[\begin{matrix}\sigma\psi & \boldsymbol{1}\\
1 & 1\end{matrix}\right],\left[\begin{matrix}\sigma & \psi\\
1 & 1\end{matrix}\right],\left[\begin{matrix}\psi & \sigma\\
1 & 1\end{matrix}\right],\left[\begin{matrix}\boldsymbol{1} & \sigma\psi\\
1 & 1\end{matrix}\right].$$

\medskip

We shall show the following Theorem, which provides, taking into account the above tables, a complete picture of all the 
products of two weight one multiple zeta values in the restrictive context we have 
prefixed (in two variables $t_1,t_2$, and with the semi-characters $\boldsymbol{1},\sigma,\psi$ and $\sigma\psi$), unveiling partly an extremely complex and mysterious algebra structure. From now on, we suppose that $q>2$. All the arguments presented below under this restriction can be also developed in the 
case $q=2$ with appropriate modifications, but we refrain from giving full details here.

\begin{Theorem}\label{formulas}
The following formulas hold.
\begin{enumerate}
\item $\zeta_C(1)^2=\zeta_C(2)+2\zeta_C(1,1)$,
\item $\zeta_C(1;\sigma)\zeta_C(1)=\zeta_C(2;\sigma)+\zeta_C\left(\begin{matrix} \sigma & \boldsymbol{1}\\
1 & 1\end{matrix}\right)$,
\item $\zeta_C(1;\psi)\zeta_C(1)=\zeta_C(2;\psi)+\zeta_C\left(\begin{matrix} \psi & \boldsymbol{1}\\
1 & 1\end{matrix}\right)$,
\item $\zeta_C(1;\sigma)\zeta_C(1;\psi)=\zeta_C(2;\sigma\psi)$,
\item $\zeta_C(1;\sigma\psi)\zeta_C(1)=\zeta_C(2;\sigma\psi)-\zeta_C\left(\begin{matrix}\sigma & \psi\\
1 & 1\end{matrix}\right)-\zeta_C\left(\begin{matrix}\psi & \sigma\\
1 & 1\end{matrix}\right)+\zeta_C\left(\begin{matrix}\boldsymbol{1} & \sigma\psi\\
1 & 1\end{matrix}\right)+\zeta_C\left(\begin{matrix} \sigma\psi& \boldsymbol{1}\\
1 & 1\end{matrix}\right)$.
\end{enumerate}
\end{Theorem}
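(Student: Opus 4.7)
Plan: All five identities admit a single uniform proof. Given any two semi-characters $\sigma_1,\sigma_2\in\{\boldsymbol{1},\sigma,\psi,\sigma\psi\}$ appearing in the theorem, I would expand
$$\zeta_C(1;\sigma_1)\,\zeta_C(1;\sigma_2)=\sum_{a,b\in A^+}\frac{\sigma_1(a)\sigma_2(b)}{ab}$$
and split the range of summation into the three pieces $\{|a|>|b|\}$, $\{|a|<|b|\}$ and the diagonal $\{|a|=|b|\}$. By Definition~\ref{definitionmzv} the first two pieces are precisely the depth-two multiple zeta values $\zeta_C\bigl(\begin{smallmatrix}\sigma_1 & \sigma_2\\ 1 & 1\end{smallmatrix}\bigr)$ and $\zeta_C\bigl(\begin{smallmatrix}\sigma_2 & \sigma_1\\ 1 & 1\end{smallmatrix}\bigr)$, so all the content of the theorem is encoded in the diagonal contribution $\sum_{d\geq 0}\sum_{a,b\in A^+(d)}\sigma_1(a)\sigma_2(b)/(ab)$.

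On the diagonal, first separate $a=b$, which sums to $\sum_d S_d(2;\sigma_1\sigma_2)=\zeta_C(2;\sigma_1\sigma_2)$. For the remaining off-diagonal contribution, apply the partial-fraction identity $1/(ab)=(b-a)^{-1}(1/a-1/b)$, then swap $a\leftrightarrow b$ in the $1/b$-half to bring both halves into the common form $\cdot/((b-a)a)$, and finally substitute $c=b-a$. Since $a$ and $b=a+c$ are both automatically monic of degree $d$ when $\deg c<d$, for fixed $a\in A^+(d)$ the variable $c$ ranges bijectively over the set $T_d$ of all nonzero polynomials of degree strictly less than $d$. Expanding $\sigma_j(a+c)$ via the ring-homomorphism property of each $\chi_{t_i}$, the $a\neq b$ diagonal reduces to a linear combination of products of the form
$$\Bigl(\sum_{c\in T_d}\frac{c(t_{i_1})\cdots c(t_{i_k})}{c}\Bigr)\cdot S_d(1;\rho),\qquad k\in\{0,1,2\},$$
where $\rho$ is one of the semi-characters in play.

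The crucial step is the evaluation of the inner $T_d$-sums through the stratification $T_d=\bigsqcup_{e=0}^{d-1}\FF_q^{\times}\cdot A^+(e)$. Writing $c=\lambda a'$ with $\lambda\in\FF_q^{\times}$ and $a'\in A^+(e)$, the summand pulls out as $\lambda^{k-1}a'(t_{i_1})\cdots a'(t_{i_k})/a'$, and under the standing hypothesis $q>2$ the moment $\sum_{\lambda\in\FF_q^{\times}}\lambda^{k-1}$ vanishes for $k\in\{0,2\}$ and equals $-1$ in $\FF_q$ for $k=1$. Consequently $\sum_{c\in T_d}c^{-1}=0$ and $\sum_{c\in T_d}c(t_1)c(t_2)/c=0$, while $\sum_{c\in T_d}c(t_i)/c=-\sum_{e<d}S_e(1;\chi_{t_i})$.

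Plugging these evaluations back and summing over $d\geq 0$ yields exactly the right-hand side of each identity. Identity~(1) is the cleanest: every surviving $T_d$-sum is zero and hence the full off-diagonal contribution vanishes term by term. Identities~(2), (3) and~(4) involve the non-vanishing $k=1$ sums, whose contributions cancel one (respectively both, in the case of~(4)) of the two depth-two zeta values coming from the regions $|a|>|b|$ and $|a|<|b|$. Identity~(5) is the most delicate: the expansion $(\sigma\psi)(a+c)=\bigl(a(t_1)+c(t_1)\bigr)\bigl(a(t_2)+c(t_2)\bigr)$ produces four monomials in $c$, of which the $k=0$ and $k=2$ pieces are killed by the moment vanishing (this is exactly where the hypothesis $q>2$ is essential) and the two surviving $k=1$ pieces produce the extra pair $-\zeta_C\bigl(\begin{smallmatrix}\sigma & \psi\\ 1 & 1\end{smallmatrix}\bigr)-\zeta_C\bigl(\begin{smallmatrix}\psi & \sigma\\ 1 & 1\end{smallmatrix}\bigr)$ visible on the right-hand side. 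The main technical obstacle is the careful sign bookkeeping in this last identity, arising from the $a\leftrightarrow b$ swap in the partial fraction.
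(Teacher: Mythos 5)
Your proof is correct, but it follows a genuinely different route from the paper's. The paper works entirely with closed-form evaluations of the degree-$d$ power sums: it imports from \cite{PEL&PER} the formula $F_{d+1}(1;q)=b_d(t_1)\cdots b_d(t_q)/l_d$, extracts from it the explicit expressions (\ref{e1})--(\ref{e3}), (\ref{f2}), (\ref{f3}) for $S_d(1;\cdot)$ and $S_d(2;\cdot)$ in terms of the polynomials $b_d$ and $l_d$, and then uses Lemma \ref{lemmatau} ($\tau(b_d(t))=l_d\sum_{i\le d}b_i(t)/l_i$) to recognize the depth-two sums, arriving at per-degree identities such as $S_d(1;\sigma)S_d(1;\boldsymbol{1})=S_d(2;\sigma)-S_d\left(\begin{smallmatrix}\boldsymbol{1} & \sigma\\ 1 & 1\end{smallmatrix}\right)$ before summing over $d$. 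You instead never touch these explicit formulas: your partial-fraction trick $1/(ab)=(b-a)^{-1}(1/a-1/b)$, the substitution $c=b-a$, and the decomposition $T_d=\bigsqcup_e\FF_q^\times\cdot A^+(e)$ with the character-sum vanishing $\sum_{\lambda\in\FF_q^\times}\lambda^{k-1}=0$ for $k\in\{0,2\}$ (and $=-1$ for $k=1$) reduce everything to elementary combinatorics; I checked that the bookkeeping in each of (1)--(5) comes out right, including the sign in (5). Your argument is more self-contained and closer in spirit to Thakur's original sum-shuffle computations, and it makes transparent exactly where $q>2$ enters; the paper's approach is heavier but yields as a by-product the explicit identities (\ref{first}), (\ref{Fsfirst}), (\ref{eq3}) that are reused later for the Bernoulli--Goss applications (the formula for $\operatorname{BG}_{q^d-2}$ and the degree computation), which your method does not directly provide.
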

We observe that the formula (1) can be found in Thakur's \cite[Theorem 5.10.13]{THA}. Further, due to 
the symmetry of the roles of $t_1$ and $t_2$, the formulas (2) and (3) are equivalent.
Moreover, the formula (4) is in fact well known (see Perkins' \cite{PER} for more general 
formulas of this type). However, we will give a proof of this in the spirit of multiple zeta values.
The formulas (2) is, on the other side, new, as far as we can see. 

\section{Twisted power sums}

We need a few tools in order to obtain our formulas; more precisely, we have to improve our
skill in computing twisted powers sums. For this purpose, we are going to use the tools introduced
in the recent preprint of Perkins and the author \cite{PEL&PER}; we are going to use for a 
while the notations of this reference. Let $s$ be an integer $\geq0$. We set, for an integer $d\geq 0$:
$$S_d(n;s)=\sum_{a\in A^+(d)}\frac{a(t_1)\cdots a(t_s)}{a^n}\in K[\underline{t}_s].$$ We are 
thus considering a special case of Definition \ref{defpowersums}.
We also set, for $d\geq 0$, $F_0(n;s)=0$ and 
$$F_d(n;s)=\sum_{i=0}^{d-1}S_d(n;s)\in K[\underline{t}_s],$$
so that $$\lim_{d\rightarrow\infty}F_d(n;s)=\zeta_C(n;s):=\prod_P\left(1-\frac{P(t_1)\cdots P(t_s)}{P^n}\right)^{-1}\in\TT_s^\times,$$ where the product runs over the irreducible polynomials of $A^+$ (in general, all along this note,
empty sums are by convention equal to zero). We are using the notation of \cite{PEL&PER}.
In particular, if $\sigma$ is the semi-character $\chi_{t_1}\cdots\chi_{t_s}$ (of Dirichlet type),
then, the comparison between the notations of ibid. and those of the present note are:
$$S_d(n;s)=S_d(n;\sigma),\quad \zeta_C(n;s)=\zeta_C(n;\sigma).$$

It is easy to show that, if $0\leq s'<s$, 
$S_d(n;s')$ is the coefficient of $(t_{s'+1}\cdots t_s)^{d}$ in $F_{d+1}(n;s)$. 
We define inductively $l_0=1$ and $l_i=(\theta-\theta^{q^i})l_{i-1}$, and we set $l_{-n}=0$ for $n>0$.
We denote by $b_i(Y)$ the product $(Y-\theta)\cdots(Y-\theta^{q^{i-1}})\in A[Y]$ (for an indeterminate $Y$) if $i>0$ and we set
$b_0(Y)=1$. We also write $m=\lfloor\frac{s-1}{q-1}\rfloor$ (the brackets denote the integer part so that $m$ is the biggest integer $\leq\frac{s-1}{q-1}$). We set
$$\Pi_{s,d}=\frac{b_{d-m}(t_1)\cdots b_{d-m}(t_s)}{l_{d-1}}\in K[\underline{t}_s],\quad d\geq \max\{1,m\}.$$

Now,
we quote \cite[Theorem 1]{PEL&PER}: 
\begin{Theorem}\label{Theorem2}
For all integers $s\geq 1$, such that $s\equiv1\pmod{q-1}$, there exists a non-zero rational fraction $\mathbb{H}_{s}\in K(Y,\underline{t}_s)$
such that, for all $d\geq m$, the following identity holds:
$$F_d(1;s)=\Pi_{s,d}\mathbb{H}_s|_{Y=\theta^{q^{d-m}}}.$$
If $s=1$, we have the explicit formula $$\mathbb{H}_1=\frac{1}{t_1-\theta}.$$
Further, if $s=1+m(q-1)$ for an integer $m>0$, then
the fraction $\mathbb{H}_s$ is a 
polynomial of $A[Y,\underline{t}_s]$ with the following properties:
\begin{enumerate}
\item For all $i$, $\deg_{t_i}(\mathbb{H}_s)=m-1$,
\item $\deg_{Y}(\mathbb{H}_s)=\frac{q^m - 1}{q-1} - m$.
\end{enumerate}
The polynomial $\mathbb{H}_s$ is uniquely determined by these properties.
\end{Theorem}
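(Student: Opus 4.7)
The plan is to recognize both $F_d(1;s)$ and the increment $S_d(1;s)$ as specializations at $Y=\theta^{q^{d-m}}$ of rational functions in an auxiliary variable $Y$, and then to identify $\mathbb{H}_s$ as the unique solution of a first-order $q$-difference equation. The key technical input is an explicit evaluation of the individual power sum. Using the partial-fraction expansion of $1/a$ over $a\in A^+(d)$ together with the classical evaluations $\sum_{c\in\FF_q}c^n\in\{0,-1\}$, and using the constraint $s\equiv 1\pmod{q-1}$ to prevent trivial vanishing, one should derive a closed form
$$S_d(1;s)=\Pi_{s,d}\cdot R_s(\theta^{q^{d-m}},\underline{t}_s)$$
for a single explicit rational function $R_s\in K(Y,\underline{t}_s)$ independent of $d$.

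Granted this, I would combine the telescoping $F_{d+1}(1;s)-F_d(1;s)=S_d(1;s)$ with the elementary ratio
$$\frac{\Pi_{s,d+1}}{\Pi_{s,d}}=\frac{\prod_{j=1}^{s}(t_j-\theta^{q^{d-m}})}{\theta-\theta^{q^d}},$$
translating the desired identity, which has to hold at all $Y=\theta^{q^{d-m}}$, into the single functional equation
$$\frac{\prod_{j=1}^{s}(t_j-Y)}{\theta-Y^{q^m}}\,\mathbb{H}_s(Y^q,\underline{t}_s)-\mathbb{H}_s(Y,\underline{t}_s)=R_s(Y,\underline{t}_s)$$
viewed as an equality in $K(Y,\underline{t}_s)$; since it must hold at the infinitely many distinct specializations $Y=\theta^{q^{d-m}}$, it holds identically. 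The functional equation admits a rational solution, uniquely pinned down once the initial value is matched at $d=\max\{1,m\}$. For $s=1$ (so $m=0$), one verifies directly that the constant $1/(t_1-\theta)$ solves both the functional equation and the initial condition, giving the explicit form $\mathbb{H}_1=1/(t_1-\theta)$.

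For $s=1+m(q-1)$ with $m\geq 1$, polynomiality amounts to showing that $\Pi_{s,d}$ divides $F_d(1;s)$ inside $K[\underline{t}_s]$, equivalently that
$$F_d(1;s)\big|_{t_j=\theta^{q^k}}=0,\qquad 0\leq k\leq d-m-1,\ 1\leq j\leq s.$$
Here the Frobenius identity $a(\theta^{q^k})=a^{q^k}$ for $a\in A$ turns each such specialization into an identity among ordinary power sums $\sum_{a\in A^+(i)}a^{q^k-1}\prod_{j'\neq j}a(t_{j'})$, which should collapse to zero after summation over $i$, using $s\equiv 1\pmod{q-1}$ and $\FF_q$-character sum evaluations. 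Once divisibility is in hand, the degree bound $\deg_{t_i}\mathbb{H}_s\leq m-1$ follows from comparing the degree $d-1$ of $F_d(1;s)$ in each $t_i$ with the degree $d-m$ of $\Pi_{s,d}$, and $\deg_Y\mathbb{H}_s=(q^m-1)/(q-1)-m$ is forced by degree-balancing on the functional equation: matching the $q\cdot\deg_Y\mathbb{H}_s+s$ on the left with $q^m+\deg_Y\mathbb{H}_s$ on the right yields exactly this value. Uniqueness within the asserted degree box is then immediate, because any competitor differs from $\mathbb{H}_s$ by a polynomial in $Y$ of bounded degree vanishing at the infinitely many $Y=\theta^{q^{d-m}}$.

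The main obstacle I expect is the vanishing of $F_d(1;s)$ at $t_j=\theta^{q^k}$ underlying Step~3; rewriting it via Frobenius yields a combinatorial sum whose cancellation seems to demand a delicate telescoping on $i$ that genuinely uses the hypothesis $s\equiv 1\pmod{q-1}$, perhaps via an Anderson--Thakur style argument, rather than yielding to direct character-sum manipulation. By contrast, Step~1 (the closed form of $S_d(1;s)$) is a patient but routine partial-fraction computation, and the degree and uniqueness parts reduce to formal bookkeeping once divisibility is in place.
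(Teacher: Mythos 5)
First, a point of comparison: the paper does not prove this statement at all --- it is quoted verbatim from \cite[Theorem 1]{PEL&PER} --- so there is no internal proof to measure your argument against. Judged on its own terms, your outline has the right global architecture: the telescoping $F_{d+1}(1;s)-F_d(1;s)=S_d(1;s)$, the correct ratio $\Pi_{s,d+1}/\Pi_{s,d}=\prod_j(t_j-Y)/(\theta-Y^{q^m})$ at $Y=\theta^{q^{d-m}}$, the resulting first-order $q$-difference equation, the degree bookkeeping (your balancing $s+q\delta-q^m=\delta$ does give $\delta=\frac{q^m-1}{q-1}-m$), the uniqueness via interpolation at the infinitely many points $Y=\theta^{q^{d-m}}$, and the correct verification for $s=1$. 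But two of your steps conceal essentially the entire content of the theorem.

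(i) Your Step 1 --- that $S_d(1;s)=\Pi_{s,d}\,R_s(\theta^{q^{d-m}},\underline{t}_s)$ for a single rational $R_s$ independent of $d$ --- is not ``a patient but routine partial-fraction computation.'' The character-sum argument you invoke works for $d=1$, where $a=\theta+c$ and one can expand $a(t_j)=(t_j-\theta)+(\theta+c)$ and use $\sum_{c\in\FF_q}c^n\in\{0,-1\}$; but for $d\geq 2$ the sum runs over $q^d$ monic polynomials and $1/a$ is not a polynomial function of the coefficients of $a$, so orthogonality does not apply and no direct partial-fraction expansion is available. The existence of such a $d$-independent closed form for twisted power sums is precisely the hard theorem of Anderson--Thakur/Perkins/Demeslay type; indeed, granting the statement to be proved, $R_s(Y)=\frac{\prod_j(t_j-Y)}{\theta-Y^{q^m}}\mathbb{H}_s(Y^q)-\mathbb{H}_s(Y)$, so your Step 1 is essentially equivalent to the existence assertion you are trying to establish. (ii) Even granting Step 1, the claim that ``the functional equation admits a rational solution, uniquely pinned down by an initial value'' is unjustified: a generic equation $\alpha(Y)H(Y^q)-H(Y)=R(Y)$ over $K(Y,\underline{t}_s)$ has no rational solution (already $H(Y^q)-H(Y)=1$ has none), and a rational function is not determined by its value at a single point. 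The logic must run the other way: one constructs $\mathbb{H}_s$ directly (e.g.\ by an induction on $d$ that exhibits the interpolating rational function) and only then reads off the difference equation. Finally, you yourself concede that the vanishing $F_d(1;s)|_{t_j=\theta^{q^k}}=0$ needed for polynomiality when $s=1+m(q-1)$ is beyond the methods you describe. Taken together, the proposal secures the formal shell (degree bounds, uniqueness, the case $s=1$) but none of the substantive existence or polynomiality claims.
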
 
We apply this Theorem to compute explicitly the twisted power sums associated to 
the data we have chosen. For this, it suffices to choose $s=q$. In this case $m=1$ and the 
Polynomial $\mathbb{H}_q$ of Theorem \ref{Theorem2} has degree $0$ in $Y$ as
well as in $t_1,\ldots,t_q$. From \cite[\S 2.6]{PEL&PER} we deduce that 
$\mathbb{H}_q=1$ (the same result is given as an example in Florent Demeslay's thesis \cite{DEM}). In particular, to compute most of the twisted power sums associated to 
our data it suffices to analyze the polynomials
\begin{equation}\label{Fdq}
F_{d+1}(1;q)=\frac{b_{d}(t_1)\cdots b_{d}(t_q)}{l_{d}}.\end{equation}
The coefficients of $(t_3\cdots t_q)^d$, $(t_2\cdots t_q)^d$ and $(t_1\cdots t_q)^d$ in $F_{d+1}(1;q)$ are easily computed, and we get, for all $d\geq 0$ (note that these are well known formulas; see \cite{PER}):
\begin{eqnarray}
S_d(1;0)&=&\frac{1}{l_d},\label{e1}\\
S_d(1;1)&=&\frac{b_d(t_1)}{l_d},\label{e2}\\
%S_d(1;\sigma)&=&\frac{b_d(t_2)}{l_d}\\
S_d(1;2)&=&\frac{b_d(t_1)b_d(t_2)}{l_d}.\label{e3}
\end{eqnarray}
To compute $S_d(2;0),S_d(2,1),S_d(2,2)$ (\footnote{In the notations of \cite{PEL&PER};
in our note, we should write 
$S_d(2;\boldsymbol{1}),S_d(2;\sigma),S_d(2;\sigma\psi)$.}) we observe that,
replacing $\theta$ with $\theta^q$ in 
(\ref{Fdq}): $$F_{d+1}(q;q)=\frac{b_{d+1}(t_1)\cdots b_{d+1}(t_q)}{l_d^q(t_1-\theta)\cdots(t_q-\theta)}.$$
Note that the former is a polynomial in $\underline{t}_s$, written as a reducible fraction.
We get that $$F_{d+1}(2;2)=\left.\frac{b_{d+1}(t_1)\cdots b_{d+1}(t_q)}{l_d^q(t_1-\theta)\cdots(t_q-\theta)}\right|_{t_3=\cdots=t_q=\theta}=\frac{b_{d+1}(t_1)b_{d+1}(t_2)}{l_d^2(t_1-\theta)(t_2-\theta)}.$$
Calculating the coefficients of $t_2^d$ and $(t_1t_2)^d$, and 
subtracting $F_{d+1}(2;2)-F_d(2;2)$, we easily obtain the formulas, valid for $d\geq 0$ (the first one is well known):
\begin{eqnarray}
S_d(2;0)&=&\frac{1}{l_d^2},\nonumber     \\
S_d(2;1)&=&\frac{b_d(t_1)}{(t_1-\theta)l_d^2}(t_1-\theta^{q^d}),  \label{f2}  \\
S_d(2;2)&=& \frac{b_d(t_1)b_d(t_2)}{(t_1-\theta)(t_2-\theta)l_d^2} (t_1t_2-\theta^{q^d}(t_1+t_2)+2\theta^{1+q^d}-\theta^2)\nonumber\\
&=&\frac{b_d(t_1)b_d(t_2)}{(t_1-\theta)(t_2-\theta)l_d^2}[(t_1-\theta)(t_2-\theta)+(t_1-\theta)(\theta-\theta^{q^d})+(t_2-\theta)(\theta-\theta^{q^d})]\label{f3}.
\end{eqnarray}
We will also need the next Lemma which is also well known, where $\tau:A[t]\rightarrow A[t]$ is the $\FF_q[t]$-linear 
endomorphism such that $\tau(\theta)=\theta^q$.
\begin{Lemma}\label{lemmatau} We have the following formula, which holds in $A[t]$.
$$\tau(b_d(t))=l_d\sum_{i=0}^d\frac{b_i(t)}{l_i},\quad d\geq0.$$
\end{Lemma}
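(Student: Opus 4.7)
I plan to prove the identity by induction on $d$. The base case $d=0$ is immediate since both sides equal $1$ (with $b_0 = l_0 = 1$).

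For the inductive step, I will exploit three structural recursions:
\[
b_{d+1}(t) = b_d(t)(t-\theta^{q^d}),\qquad l_{d+1} = l_d(\theta - \theta^{q^{d+1}}),\qquad \tau(b_{d+1}(t)) = \tau(b_d(t))(t - \theta^{q^{d+1}}).
\]
The last one follows from the $\FF_q[t]$-linearity and multiplicativity of $\tau$ on $A[t]$ together with $\tau(\theta^{q^d}) = \theta^{q^{d+1}}$. The pivotal observation is a telescoping one: since $\tau(b_d(t)) = (t-\theta^q)(t-\theta^{q^2})\cdots(t-\theta^{q^d})$, multiplying by the missing factor $(t-\theta)$ produces
\[
\tau(b_d(t))\,(t-\theta) = b_{d+1}(t).
\]

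The inductive step then proceeds by writing $t - \theta^{q^{d+1}} = (t-\theta) + (\theta - \theta^{q^{d+1}})$ and splitting:
\[
\tau(b_{d+1}(t)) = \tau(b_d(t))(t-\theta) + (\theta - \theta^{q^{d+1}})\,\tau(b_d(t)) = b_{d+1}(t) + (\theta - \theta^{q^{d+1}})\,l_d\sum_{i=0}^{d}\frac{b_i(t)}{l_i},
\]
where the second equality uses the inductive hypothesis. Since $(\theta - \theta^{q^{d+1}})\,l_d = l_{d+1}$ and $b_{d+1}(t) = l_{d+1}\cdot \frac{b_{d+1}(t)}{l_{d+1}}$, combining the two terms yields $l_{d+1}\sum_{i=0}^{d+1} \frac{b_i(t)}{l_i}$, completing the induction.

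The whole argument is a straightforward telescoping, so I do not anticipate a genuine obstacle; the only delicate point is keeping track of the fact that $\tau$ shifts every $\theta^{q^i}$ up by one in the $q$-index, which is why $\tau(b_d(t))(t-\theta)$ reassembles exactly into $b_{d+1}(t)$. An alternative route, which I mention only to be safe, would be to observe that both sides are polynomials of degree $d$ in $t$ and to verify equality on $d+1$ interpolation points $t = \theta^{q^j}$ for $0 \leq j \leq d$, using that $b_i(\theta^{q^j}) = 0$ for $j < i$; but the inductive proof is cleaner and more self-contained.
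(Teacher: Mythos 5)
Your proof is correct and takes essentially the same route as the paper's: the same induction on $d$, resting on the decomposition $t-\theta^{q^{d+1}}=(t-\theta)+(\theta-\theta^{q^{d+1}})$ and on the key identity $(t-\theta)\,\tau(b_d(t))=b_{d+1}(t)$, which the paper uses implicitly by restating the claim in the equivalent form $b_{d+1}(t)=(t-\theta)\,l_d\sum_{i=0}^d l_i^{-1}b_i(t)$. The only difference is cosmetic: you manipulate $\tau(b_{d+1}(t))$ directly instead of the claim multiplied through by $(t-\theta)$.
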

\begin{proof} We recall the proof for convenience of the reader.
We proceed by induction over $d$. If $d=0$, the formula is obvious. If $d>0$,
it suffices to show that $$b_{d+1}(t)=(t-\theta)l_d\sum_{i=0}^dl_i^{-1}b_i(t).$$
Now, we compute easily, by using the induction hypothesis:
\begin{eqnarray*}
b_{d+2}(t)&=&(t-\theta+\theta-\theta^{q^d})b_{d+1}\\
&=&(\theta-\theta^{q^d})b_{d+1}+(t-\theta)b_{d+1}\\
&=&(t-\theta)l_d(\theta-\theta^{q^d})\left(\sum_{i=0}^dl_i^{-1}b_i(t)+l_{d+1}^{-1}b_{d+1}(t)\right)\\
&=&(t-\theta)l_{d+1}\sum_{i=0}^{d+1}l_i^{-1}b_i(t).
\end{eqnarray*}
\end{proof}
\section{Proof of Theorem \ref{formulas}}

\begin{proof}[Proof of Theorem \ref{formulas}, (2).]
We compute, by using (\ref{e1}) and (\ref{e2}):
$$S_d(1;\sigma)S_d(1;\boldsymbol{1})=\frac{b_d(t_1)}{l_d^2}.$$

On the other hand, we have seen in (\ref{f2}) that 
$$S_d(2;\sigma)=\frac{b_d(t_1)}{l_d^2}\frac{t_1-\theta^{q^d}}{t_1-\theta}=\frac{\tau(b_d(t_1))}{l_d^2},$$
where $\tau(b_d(t_1))$ has the obvious meaning. We compute (the third identity follows from Lemma
\ref{lemmatau}):
\begin{eqnarray*}
S_d\left(\begin{matrix}\boldsymbol{1} & \sigma\\
1 & 1\end{matrix}\right)&=&S_d(1;\boldsymbol{1})\sum_{i=0}^{d-1}S_i(1;\sigma)\\
&=&\frac{1}{l_d}\sum_{i=0}^{d-1}l_i^{-1}b_i(t_1)\\
&=&\frac{\tau(b_{d-1}(t_1))}{l_dl_{d-1}}\\
&=&\frac{b_d(t_1)}{l_d^2}\frac{\theta-\theta^{q^d}}{t_1-\theta}.
\end{eqnarray*}
Combining the above formulas, we see that
\begin{equation}\label{first}
S_d(1;\sigma)S_d(1;\boldsymbol{1})=S_d(2;\sigma)-S_d\left(\begin{matrix}\boldsymbol{1} & \sigma\\
1 & 1\end{matrix}\right).
\end{equation}
With the obvious meaning of some new notations introduced below, we deduce:
\begin{eqnarray*}
F_d(1;\sigma)F_d(1;\boldsymbol{1})&=&\sum_{i=0}^{d-1}S_i(1;\sigma)\sum_{j=0}^{d-1}S_j(1;\boldsymbol{1})\\
&=&F_d\left(\begin{matrix}\boldsymbol{1} & \sigma\\
1 & 1\end{matrix}\right)+F_d\left(\begin{matrix}\sigma& \boldsymbol{1} \\
1 & 1\end{matrix}\right)+ \sum_{i=0}^{d-1}S_d(1;\sigma)S_d(1;\boldsymbol{1})\\
&=& F_d\left(\begin{matrix}\boldsymbol{1} & \sigma\\
1 & 1\end{matrix}\right)+F_d\left(\begin{matrix}\sigma& \boldsymbol{1} \\
1 & 1\end{matrix}\right)+\sum_{i=0}^{d-1}\left(S_i(2;\sigma)-S_i\left(\begin{matrix}\boldsymbol{1} & \sigma\\
1 & 1\end{matrix}\right)\right)\\
&=&F_d\left(\begin{matrix}\boldsymbol{1} & \sigma\\
1 & 1\end{matrix}\right)+F_d\left(\begin{matrix}\sigma& \boldsymbol{1} \\
1 & 1\end{matrix}\right)-F_d\left(\begin{matrix}\boldsymbol{1} & \sigma\\
1 & 1\end{matrix}\right)+F_d(2;\sigma)\\
&=&F_d\left(\begin{matrix}\sigma& \boldsymbol{1} \\
1 & 1\end{matrix}\right)+F_d(2;\sigma),\end{eqnarray*}
where we have used (\ref{first}) in the third equality.
We rewrite the resulting formula:
\begin{equation}\label{Fsfirst}
F_d(1;\sigma)F_d(1;\boldsymbol{1})=F_d\left(\begin{matrix}\sigma& \boldsymbol{1} \\
1 & 1\end{matrix}\right)+F_d(2;\sigma).
\end{equation}
Taking the limit $d\rightarrow\infty$ in (\ref{Fsfirst}) we obtain the required multiple zeta identity.
\end{proof}
The above also implies the formula (3) of Theorem \ref{formulas} by interchanging $t_1$ and 
$t_2$.
To continue, we notice the next Lemma.
\begin{Lemma}\label{alemma}
We have that
$$S_d(2;\sigma\psi)=\frac{b_d(t_1)b_d(t_s)}{(t_1-\theta)(t_2-\theta)l_d^2}+S_d\left(\begin{matrix}\psi & \sigma\\
1 & 1\end{matrix}\right)+S_d\left(\begin{matrix}\sigma & \psi\\
1 & 1\end{matrix}\right).$$
\end{Lemma}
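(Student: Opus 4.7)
The plan is to expand $S_d(2;\sigma\psi)$ using the second form of equation (\ref{f3}) already established in the paper, and then recognize each of the three resulting terms on the right-hand side as one of the three summands appearing in the statement of the lemma.

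More precisely, I would first note that (\ref{f3}) gives, after dividing out the common factor,
\begin{eqnarray*}
S_d(2;\sigma\psi) &=& \frac{b_d(t_1)b_d(t_2)}{l_d^2} + \frac{b_d(t_1)b_d(t_2)(\theta-\theta^{q^d})}{(t_2-\theta)l_d^2} + \frac{b_d(t_1)b_d(t_2)(\theta-\theta^{q^d})}{(t_1-\theta)l_d^2}.
\end{eqnarray*}
The first summand is exactly the desired leading term in the lemma, so the task reduces to identifying the remaining two summands with the two depth-two twisted power sums.

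Next, unfolding the definition of the depth-two power sum,
\begin{eqnarray*}
S_d\left(\begin{matrix}\psi & \sigma\\ 1 & 1\end{matrix}\right) &=& S_d(1;\psi)\sum_{i=0}^{d-1}S_i(1;\sigma),
\end{eqnarray*}
and similarly for the other ordering. Using (\ref{e2}) we have $S_i(1;\sigma)=b_i(t_1)/l_i$, so Lemma \ref{lemmatau} gives
\[
\sum_{i=0}^{d-1}S_i(1;\sigma) \;=\; \frac{\tau(b_{d-1}(t_1))}{l_{d-1}} \;=\; \frac{b_d(t_1)(\theta-\theta^{q^d})}{(t_1-\theta)\,l_d},
\]
where the last equality uses $\tau(b_{d-1}(t_1))=b_d(t_1)/(t_1-\theta)$ together with the recurrence $l_d=(\theta-\theta^{q^d})l_{d-1}$. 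This is precisely the same manipulation already carried out inside the proof of part (2) of Theorem \ref{formulas}. Multiplying by $S_d(1;\psi)=b_d(t_2)/l_d$ yields
\[
S_d\left(\begin{matrix}\psi & \sigma\\ 1 & 1\end{matrix}\right)=\frac{b_d(t_1)b_d(t_2)(\theta-\theta^{q^d})}{(t_1-\theta)\,l_d^2},
\]
and the symmetric computation (swap $t_1\leftrightarrow t_2$) gives the analogous expression for $S_d\binom{\sigma\ \psi}{1\ 1}$.

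Summing the three pieces then matches the expansion of $S_d(2;\sigma\psi)$ displayed above, completing the proof. No serious obstacle is expected: the whole argument is essentially the same Lemma \ref{lemmatau}-based identity used for part (2) of Theorem \ref{formulas}, applied once to each of the variables $t_1$ and $t_2$ and combined with the already factored form of $S_d(2;\sigma\psi)$ in (\ref{f3}). The only point to be slightly careful about is the edge case $d=0$, where the internal sum $\sum_{i=0}^{-1}$ is empty and both depth-two terms vanish; but then also $b_0(t_1)b_0(t_2)/l_0^2=1$ and the factor $(\theta-\theta^{q^0})=0$ kills the two correction terms in (\ref{f3}), so the identity holds trivially.
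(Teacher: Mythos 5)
Your proof is correct and follows essentially the same route as the paper's: both compute the two depth-two power sums via (\ref{e2}) and Lemma \ref{lemmatau} and match them against the three-term expansion (\ref{f3}) of $S_d(2;\sigma\psi)$, differing only in the order in which the pieces are assembled. Your reading of the leading term as $b_d(t_1)b_d(t_2)/l_d^2$ (rather than the literal $\frac{b_d(t_1)b_d(t_2)}{(t_1-\theta)(t_2-\theta)l_d^2}$ of the statement, where the factor $(t_1-\theta)(t_2-\theta)$ has evidently been dropped by typo) is the one consistent with how the lemma is used in the proofs of parts (4) and (5).
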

\begin{proof}
We compute:
\begin{eqnarray*}
S_d\left(\begin{matrix}\sigma & \psi\\
1 & 1\end{matrix}\right)&=&S_d(1;\sigma)\sum_{i=0}^{d-1}S_i(1;\psi)\\
&=&\frac{b_d(t_1)}{l_d}\sum_{i=0}^{d-1}\frac{b_i(t_2)}{l_i}\\
&=&\frac{b_d(t_1)b_d(t_s)}{(t_1-\theta)(t_2-\theta)l_d^2}[(t_1-\theta)(\theta-\theta^{q^d})],
\end{eqnarray*}
in virtue of (\ref{e2}) and Lemma \ref{lemmatau}. Similarly,
we have
$$S_d\left(\begin{matrix}\psi & \sigma\\
1 & 1\end{matrix}\right)=\frac{b_d(t_1)b_d(t_s)}{(t_1-\theta)(t_2-\theta)l_d^2}[(t_2-\theta)(\theta-\theta^{q^d})].$$
The lemma follows from (\ref{f3}).
\end{proof}

\begin{proof}[Proof of Theorem \ref{formulas}, (4).] As we have already mentioned, this is 
a well known formula but we want to provide a new proof.
We note, by (\ref{e2}), that 
$$S_d(1;\sigma)S_d(1;\psi)=\frac{b_d(t_1)b_d(t_2)}{(t_1-\theta)(t_2-\theta)l_d^2}[(t_1-\theta)(t_2-\theta)].$$
Hence, Lemma \ref{alemma} implies the formula
\begin{equation}\label{formulabis}
S_d(1;\sigma)S_d(1;\psi)=S_d(2;\sigma\psi)-S_d\left(\begin{matrix}\psi & \sigma\\
1 & 1\end{matrix}\right)-S_d\left(\begin{matrix}\sigma & \psi\\
1 & 1\end{matrix}\right).
\end{equation}
We deduce:
\begin{eqnarray*}
F_d(1;\sigma)F_d(1;\psi)&=&F_d\left(\begin{matrix}\psi & \sigma\\
1 & 1\end{matrix}\right)+F_d\left(\begin{matrix}\sigma & \psi\\
1 & 1\end{matrix}\right)+\sum_{i=0}^{d-1}S_d(1;\sigma)S_d(1;\psi)\\
&=&F_d\left(\begin{matrix}\psi & \sigma\\
1 & 1\end{matrix}\right)+F_d\left(\begin{matrix}\sigma & \psi\\
1 & 1\end{matrix}\right)+\sum_{i=0}^{d-1}\left(S_d(2;\sigma\psi)-S_d\left(\begin{matrix}\psi & \sigma\\
1 & 1\end{matrix}\right)-S_d\left(\begin{matrix}\sigma & \psi\\
1 & 1\end{matrix}\right)\right)\\
&=&F_d\left(\begin{matrix}\psi & \sigma\\
1 & 1\end{matrix}\right)-F_d\left(\begin{matrix}\psi & \sigma\\
1 & 1\end{matrix}\right)+F_d\left(\begin{matrix}\sigma & \psi\\
1 & 1\end{matrix}\right)-F_d\left(\begin{matrix}\sigma & \psi\\
1 & 1\end{matrix}\right)+F_d(2;\sigma\psi)\\
&=&F_d(2;\sigma\psi),
\end{eqnarray*}
where we have applied the formula (\ref{formulabis}) in the third equality.
The formula of the theorem follows by letting $d\rightarrow\infty$.
\end{proof}

\begin{proof}[Proof of Theorem \ref{formulas}, (5).]
The identities (\ref{e1}) and (\ref{e3}) imply that 
$$S_d(1;\boldsymbol{1})S_d(1;\sigma\psi)=\frac{b_d(t_1)b_d(t_2)}{(t_1-\theta)(t_2-\theta)l_d^2}[(t_1-\theta)(t_2-\theta)].$$
Lemma \ref{alemma} then implies that also:
\begin{equation}\label{formulater}
S_d(1;\boldsymbol{1})S_d(1;\sigma\psi)=S_d(2;\sigma\psi)-S_d\left(\begin{matrix}\psi & \sigma\\
1 & 1\end{matrix}\right)-S_d\left(\begin{matrix}\sigma & \psi\\
1 & 1\end{matrix}\right).
\end{equation}
We deduce:
\begin{eqnarray*}
F_d(1;\boldsymbol{1})F_d(1;\sigma\psi)&=&F_d\left(\begin{matrix}\boldsymbol{1} & \sigma\psi\\
1 & 1\end{matrix}\right)+F_d\left(\begin{matrix}\sigma\psi & \boldsymbol{1}\\
1 & 1\end{matrix}\right)+\sum_{i=0}^{d-1}S_d(1;\sigma)S_d(1;\psi)\\
&=&F_d\left(\begin{matrix}\boldsymbol{1} & \sigma\psi\\
1 & 1\end{matrix}\right)+F_d\left(\begin{matrix}\sigma\psi & \boldsymbol{1}\\
1 & 1\end{matrix}\right)+\sum_{i=0}^{d-1}\left(S_d(2;\sigma\psi)-S_d\left(\begin{matrix}\psi & \sigma\\
1 & 1\end{matrix}\right)-S_d\left(\begin{matrix}\sigma & \psi\\
1 & 1\end{matrix}\right)\right)\\
&=& F_d(2;\sigma\psi)+F_d\left(\begin{matrix}\boldsymbol{1} & \sigma\psi\\
1 & 1\end{matrix}\right)+F_d\left(\begin{matrix}\sigma\psi & \boldsymbol{1}\\
1 & 1\end{matrix}\right) - F_d\left(\begin{matrix}\psi & \sigma\\
1 & 1\end{matrix}\right) - F_d\left(\begin{matrix}\sigma & \psi\\
1 & 1\end{matrix}\right),\end{eqnarray*}
so we have reached the formula
\begin{equation}\label{lastone}
F_d(1;\boldsymbol{1})F_d(1;\sigma\psi)=F_d(2;\sigma\psi)+F_d\left(\begin{matrix}\boldsymbol{1} & \sigma\psi\\
1 & 1\end{matrix}\right)+F_d\left(\begin{matrix}\sigma\psi & \boldsymbol{1}\\
1 & 1\end{matrix}\right) - F_d\left(\begin{matrix}\psi & \sigma\\
1 & 1\end{matrix}\right) - F_d\left(\begin{matrix}\sigma & \psi\\
1 & 1\end{matrix}\right).
\end{equation}
Letting $d$ tend to $\infty$ in (\ref{lastone}) concludes the proof.
\end{proof}
\begin{Remark}\label{trivialremark}{\em In fact, it is trivial that
$$\zeta_C(1)\zeta_C(1;\sigma\psi)-\zeta_C(1;\sigma)\zeta_C(1,\psi)=
\zeta_d\left(\begin{matrix}\boldsymbol{1} & \sigma\psi\\
1 & 1\end{matrix}\right)+\zeta_d\left(\begin{matrix}\sigma\psi & \boldsymbol{1}\\
1 & 1\end{matrix}\right) - \zeta_d\left(\begin{matrix}\psi & \sigma\\
1 & 1\end{matrix}\right) - \zeta_d\left(\begin{matrix}\sigma & \psi\\
1 & 1\end{matrix}\right),$$ and that the corresponding identity for the
sums $F_d$ holds as well.
Indeed, setting $\alpha_i=S_i(1;\boldsymbol{1})=l_i^{-1}$,
$\beta_i=S_i(1;\sigma)=b_i(t_1)l_i^{-1}$, $\gamma_i=S_i(1;\psi)=b_i(t_2)l_i^{-1}$
and $\delta_i=S_i(1;\sigma\psi)=b_i(t_1)b_i(t_2)l_i^{-1}$, we
see that
\begin{eqnarray*}
\lefteqn{\sum_{i\geq 0}\alpha_i\sum_{j\geq 0}\delta_j-\sum_{i\geq 0}\beta_i\sum_{j\geq 0}\gamma_j=}\\
&=&\sum_{i>j\geq 0}\alpha_i\delta_j+\sum_{j>i\geq 0}\alpha_i\delta_j-\sum_{i>j\geq 0}\beta_i\gamma_j-\sum_{j>i\geq 0}\beta_i\gamma_j+\\
& &+\sum_{i\geq 0}\alpha_i\delta_i-\sum_{i\geq 0}\beta_i\gamma_i.
\end{eqnarray*}
But, of course, $\alpha_i\delta_i=\beta_i\gamma_i$ for all $i$, from which the identity follows.
Up to this simple trick, the identity (5) of Theorem \ref{formulas} can be considered as equivalent to
the identity (4) of the same result.}\end{Remark}
\section{Some consequences}

In virtue of Proposition \ref{entireness} or by direct verification, the identities of Theorem \ref{formulas} involve 
entire functions in two variables $t_1,t_2$. Hence, specializing the variables,
we are able to recover identities in $\CC_\infty$ or in some intermediate 
Tate algebra. We are going to show several results
arising from the formula (2). 
We recall the formula (2) of Theorem \ref{formulas} for convenience:
\begin{equation}\label{eq2}
\zeta_C(1;\sigma)\zeta_C(1)=\zeta_C\left(\begin{matrix}\sigma & \boldsymbol{1}\\
1 & 1\end{matrix}\right)+\zeta_C(2;\sigma).\end{equation}
First of all, we can replace $t_1$ by $\theta$, but this does not give 
anything interesting; we mention it only to show how the substitution works. We recall the following formula
that the author proved in \cite{PEL}:
\begin{equation}\label{identityannals}
\zeta_C(1;\sigma)=\frac{\widetilde{\pi}}{(\theta-t_1)\omega(t_1)},\end{equation}
where 
$$\omega(t_1)=(-\theta)^{\frac{1}{q-1}}\prod_{i\geq 0}\left(1-\frac{t_1}{\theta^{q^i}}\right)^{-1}\in\TT_1^\times,$$ is the {\em Anderson-Thakur function} (note that
$\Omega(t_1)=\frac{1}{(t_1-\theta)\omega(t_1)}$ is an entire function; see \cite{ANG&PEL2}, containing
a recent overview on the known properties of this function).
The function $\omega(t_1)$ having a simple pole of residue $-\widetilde{\pi}$ at $t_1=\theta$,
we see that $\zeta_C(1;\sigma)|_{t_1=\theta}=1$. Now, it is easy to see that
\begin{equation}\label{eq3}
\zeta_C\left(\begin{matrix}\boldsymbol{1} & \sigma\\
1 & 1\end{matrix}\right)=\sum_{d\geq 0}S_d(1;\sigma)\sum_{i=0}^{d-1}l_i^{-1}=
\sum_{d\geq 0}l_d^{-1}b_d(t_1)\sum_{i=0}^{d-1}l_i^{-1}\end{equation}
vanishes at $t_1=\theta$. Further, $$\zeta_C(2;\sigma)=\sum_{d\geq 0}S_d(2;\sigma)$$ takes the 
value $\zeta_C(1)$ at $t_1=\theta$. Hence, with this evaluation, we only get 
the tautological identity $\zeta_C(1)=\zeta_C(1)$.

\subsection{A family of multiple zeta identitities}
We can also evaluate this identity at $t_1=\theta^{q^{-k}}$ with $k>0$ and raise the obtained identity 
to the power $q^k$. Working out the intermediate details, the reader will easily recover the following 
sum shuffle formula:
$$\zeta_C(q^k)\zeta_C(q^k-1)=\zeta_C(2q^k-1)+\zeta_C(q^k-1,q^k),\quad k\geq 1.$$

\subsection{Evaluation at trivial zeros}
Now, we evaluate the second identity of Theorem \ref{formulas} at $t_1$ equal to a trivial zero of the function $\zeta_C(1;\sigma)$ which, as it appears from 
the computation of the poles of the gamma factor of (\ref{identityannals}), means that we replace
$t_1$ with $\theta^{q^d}$ with $d>0$. This implies the following result.
\begin{Theorem}
The following formula holds
\begin{equation}\label{formulaBG}
\operatorname{BG}_{q^d-2}=-\sum_{d\geq i>j\geq 0}\frac{b_i(\theta^{q^d})}{l_il_j},\quad d\geq 1.
\end{equation}
\end{Theorem}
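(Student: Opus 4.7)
The plan is to evaluate identity (\ref{eq2}) at $t_1 = \theta^{q^d}$, exactly as the author indicates. By Proposition \ref{entireness}, each of $\zeta_C(1;\sigma)$, $\zeta_C(2;\sigma)$, and $\zeta_C\left(\begin{smallmatrix}\sigma & \boldsymbol{1}\\ 1 & 1\end{smallmatrix}\right)$ extends to an entire function of $t_1$, so the specialization is legitimate. The factorization (\ref{identityannals}), combined with the fact that $\omega(t_1)$ has a simple pole at $t_1 = \theta^{q^k}$ for every $k\geq 0$ (visible from its infinite product), shows that $\zeta_C(1;\sigma)$ vanishes at $t_1 = \theta^{q^d}$ for every $d\geq 1$. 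Since $\zeta_C(1) \in K_\infty$ is a finite constant, the entire left-hand side of (\ref{eq2}) vanishes after the substitution.

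Next I evaluate the two summands on the right-hand side at $t_1 = \theta^{q^d}$. Since $a \in A = \FF_q[\theta]$, Frobenius yields $a(\theta^{q^d}) = a^{q^d}$, hence
$$\zeta_C(2;\sigma)\big|_{t_1 = \theta^{q^d}} = \sum_{a \in A^+}\frac{a^{q^d}}{a^2} = \sum_{a \in A^+} a^{q^d - 2} = \zeta_C(2-q^d) = \operatorname{BG}_{q^d-2},$$
identifying the Goss--Carlitz zeta at the negative integer $2-q^d$ with the Bernoulli--Goss polynomial. For the double series, using (\ref{e1}) and (\ref{e2}), one has
$$\zeta_C\left(\begin{matrix}\sigma & \boldsymbol{1}\\ 1 & 1\end{matrix}\right) = \sum_{i \geq 1}\frac{b_i(t_1)}{l_i}\sum_{j=0}^{i-1}\frac{1}{l_j}.$$
The key observation is that $b_i(\theta^{q^d}) = \prod_{k=0}^{i-1}(\theta^{q^d}-\theta^{q^k})$ contains the factor $\theta^{q^d}-\theta^{q^d}=0$ whenever $i > d$, so the outer sum truncates to $1 \leq i \leq d$, producing exactly $\sum_{d \geq i > j \geq 0} b_i(\theta^{q^d})/(l_i l_j)$.

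Putting the pieces together, identity (\ref{eq2}) specialized at $t_1 = \theta^{q^d}$ reads
$$0 = \sum_{d \geq i > j \geq 0}\frac{b_i(\theta^{q^d})}{l_il_j} + \operatorname{BG}_{q^d-2},$$
which is the claimed formula. The proof is essentially a bookkeeping exercise once the evaluation is set up; the only steps requiring genuine input are the vanishing of $\zeta_C(1;\sigma)$ at $\theta^{q^d}$ (a direct consequence of (\ref{identityannals}) and the pole structure of $\omega$) and the truncation of the $b_i$-sum. I do not anticipate a serious obstacle; the subtlest point is the appeal to Proposition \ref{entireness} to legitimize the substitution in each of the three separately entire summands.
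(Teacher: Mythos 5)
Your proof is correct and follows essentially the same route as the paper: evaluate identity (2) of Theorem \ref{formulas} at $t_1=\theta^{q^d}$, use the trivial zero of $\zeta_C(1;\sigma)$ coming from (\ref{identityannals}), identify $\zeta_C(2;\sigma)|_{t_1=\theta^{q^d}}$ with $\operatorname{BG}_{q^d-2}$, and truncate the double sum via the vanishing of $b_i(\theta^{q^d})$ for $i>d$. The only cosmetic difference is that the paper steps back to the finite-level identity (\ref{Fsfirst}) before passing to the limit, whereas you substitute directly into the limiting identity under cover of Proposition \ref{entireness}.
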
 
\begin{proof} In fact, to make things more transparent, we make a step back to the identity  (\ref{Fsfirst}) 
that we rewrite as
$$F_k(2;\sigma)=F_k(1;\sigma)F_k(1;\boldsymbol{1})-F_k\left(\begin{matrix}\sigma & \boldsymbol{1}\\
1 & 1\end{matrix}\right),\quad k\geq 0.$$ By Proposition \ref{entireness}, we see that 
all the sequences $F_k(\cdots)$ involved tend, for $k\rightarrow\infty$, to entire functions
of the variable $t_1$. However, we already know from \cite[Proposition 6]{ANG&PEL}
that $F_k(2;\sigma)$ and $F_k(1;\sigma)F_k(1;\boldsymbol{1})$ tend to entire functions, 
so we immediately obtain that $F_k\left(\begin{smallmatrix}\sigma & \boldsymbol{1}\\
1 & 1\end{smallmatrix}\right)$ tends to an entire function as $k\rightarrow\infty$ without using Proposition \ref{entireness} (in fact, this can be also seen directly).
Replacing $t_1=\theta^{q^d}$ with $d>0$ yields the value zero for the limit $\lim_{k\rightarrow\infty}F_k(1;\sigma)F_k(1;\boldsymbol{1})=\zeta_C(1;\sigma)\zeta_C(1)$ evaluated at $t_1=\theta^{q^d}$. Indeed, after (\ref{identityannals}), $\theta^{q^d}$ is a trivial zero of $\zeta_C(1;\sigma)$.
Further, we see that $$\zeta_C(2;\sigma)|_{t_1=\theta^{q^d}}=\lim_{k\rightarrow\infty}F_k(2;\sigma)|_{t_1=\theta^{q^d}}=\sum_{k\geq 0}\sum_{a\in A^+(k)}a^{q^d-2}=\operatorname{BG}_{q^d-2}\in A.$$
Moreover, by (\ref{eq3}) and evaluating at $t_1=\theta^{q^d}$,
$$
\zeta_C\left(\begin{matrix}\sigma & \boldsymbol{1} \\ 1 & 1\end{matrix}\right)_{t_1=\theta^{q^d}}=
\lim_{k\rightarrow\infty}F_k\left(\begin{matrix}\sigma & \boldsymbol{1}\\
1 & 1\end{matrix}\right)_{t_1=\theta^{q^d}}=
\sum_{i>j\geq 0}\frac{b_i(\theta^{q^d})}{l_il_j}=\sum_{d\geq i>j\geq 0}\frac{b_i(\theta^{q^d})}{l_il_j},
$$
because $b_i(\theta^{q^d})$ vanishes for all $i>d$.
\end{proof}

One nice aspect of the formula (\ref{formulaBG}) is that it is easy to reduce it
modulo an irreducible polynomial of $A$ of degree $d$.
The following family of congruences is an immediate consequence of our result, and was first observed
by Thakur in \cite{THA0}, and Angl\`es and Ould Douh in \cite{ANG&DOU}:
\begin{Corollary}\label{TAOD} For all $P$ an irreducible polynomial of $A^+(d)$, we have (recall that $|P|=q^d$):
$$\operatorname{BG}_{|P|-2}\equiv\sum_{i=0}^{d-1}\frac{1}{l_i}\equiv F_d(1;\boldsymbol{1})\pmod{P}.$$
\end{Corollary}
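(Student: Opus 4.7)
The plan is to reduce the identity (\ref{formulaBG}) modulo $P$ term by term. Observe first that the second congruence $\sum_{i=0}^{d-1} 1/l_i \equiv F_d(1;\boldsymbol{1}) \pmod{P}$ is immediate from the definition $F_d(1;\boldsymbol{1})=\sum_{i=0}^{d-1} 1/l_i$, provided each $l_i$ with $0\leq i\leq d-1$ is coprime to $P$. This is clear from $l_i=\prod_{k=1}^i(\theta-\theta^{q^k})$: every irreducible factor of $l_i$ has degree at most $i<d=\deg P$.

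For the first congruence I would start from the key fact $\theta^{q^d}\equiv\theta\pmod{P}$, which holds because $\FF_q[\theta]/P\simeq\FF_{q^d}$ and $\theta$ is Frobenius-fixed in that field under $x\mapsto x^{q^d}$. Next I discard from the sum $-\sum_{d\geq i>j\geq 0}b_i(\theta^{q^d})/(l_il_j)$ every term with $1\leq i\leq d-1$: for such $i$ the denominator $l_il_j$ is prime to $P$ by the preceding remark (since $j<i<d$), while reducing modulo $P$ gives $b_i(\theta^{q^d})\equiv b_i(\theta)=(\theta-\theta)(\theta-\theta^q)\cdots(\theta-\theta^{q^{i-1}})=0$, the $k=0$ factor being $0$. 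So only the $i=d$ terms survive after reduction.

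The crux is then to evaluate the $i=d$ contribution modulo $P$. Here both $b_d(\theta^{q^d})=\prod_{k=0}^{d-1}(\theta^{q^d}-\theta^{q^k})=D_d$ and $l_d=\prod_{k=1}^{d}(\theta-\theta^{q^k})$ are divisible by $P$ exactly once --- their $P$-parts come from the factors $\theta^{q^d}-\theta$ and $\theta-\theta^{q^d}$ respectively, while all other factors are nonzero in $\FF_q[\theta]/P\simeq\FF_{q^d}$ (using that $\theta-\theta^{q^k}\not\equiv 0\pmod P$ for $1\leq k\leq d-1$, as this would force $\theta\in\FF_{q^k}$ inside $\FF_{q^d}$, impossible since $d\nmid k$). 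Pairing up the two $P$-factors contributes a sign $-1$, and the remaining ratio $\prod_{k=1}^{d-1}(\theta^{q^d}-\theta^{q^k})/(\theta-\theta^{q^k})$ collapses to $1$ modulo $P$ (numerator and denominator are congruent term by term since $\theta^{q^d}\equiv\theta$, and the denominators are units). Therefore $D_d/l_d\equiv -1\pmod{P}$, and substituting this into (\ref{formulaBG}) yields $\operatorname{BG}_{|P|-2}\equiv -\sum_{j=0}^{d-1}(-1)/l_j=\sum_{j=0}^{d-1}1/l_j\pmod{P}$, as required.

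The only nontrivial point is the clean cancellation $D_d/l_d\equiv -1\pmod{P}$, which requires verifying that $P$ divides each of $D_d$ and $l_d$ exactly once and tracking the sign; everything else amounts to degree comparisons showing that $P$ is too large to divide $l_i$ for $i<d$.
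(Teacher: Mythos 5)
Your proof is correct and follows essentially the same route as the paper: reduce (\ref{formulaBG}) modulo $P$ termwise, discard the terms with $i<d$ because $P$ divides $\theta^{q^d}-\theta$ and hence $b_i(\theta^{q^d})$ while $l_il_j$ is a $P$-unit, and then evaluate the surviving $i=d$ terms. If anything you are more careful than the paper at the last step: the paper asserts the exact identity $b_d(\theta^{q^d})/l_d=-1$, which holds literally only for $d=1$ (for $d=2$ the ratio equals $(\theta^q-\theta)^{q-1}$), whereas you correctly establish it as a congruence modulo $P$ by pairing the two factors divisible by $P$ and matching the remaining factors via $\theta^{q^d}\equiv\theta\pmod{P}$.
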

\begin{proof} For all $i,j$ with $d\geq i>j\geq 0$, the fraction $b_i(\theta^{q^d})(l_il_j)^{-1}$ is
$P$-integral for any $P$ irreducible of degree $d$.
If $i<d$, we have that $b_i(\theta^{q^d})l_i^{-1}\equiv0\pmod{P}$, because 
$b_i(\theta^{q^d})$ is divisible, in $A$, by $\theta^{q^d}-\theta$. Further,
$b_d(\theta^{q^d})/l_d=-1$, from which the congruence follows.
\end{proof}
\begin{Remark}{\em 
The reader can do similar computations
with other formulas; more results will appear elsewhere.
Observe, however, that manipulating in the same way the  formula (5) of Theorem \ref{formulas}
returns relatively less identities. The reason seems to be that this formula is 
trivially equivalent to the formula (4), as it follows from Remark \ref{trivialremark}.
At least, we deduce, specializing $t_1=\theta^{q^{-k}}$ and $t_2=\theta^{q^{-h}}$, the following strange sum shuffle identity, valid for $h,k\geq 0$ with $h+k>0$:
\begin{eqnarray*}
\lefteqn{\zeta_C(1)^{q^k}\zeta_C(q^{k+h}-q^h-1)=}\\
&=&\zeta_C(2q^{h+k}-q^h-1)+\zeta_C(q^{k+h},q^{k+h}-q^h-1)+\zeta_C(q^{k+h}-q^h-1,q^{k+h})-\\
 & &-\zeta_C(q^{k+h}-q^h,q^{k+h}-1)-\zeta_C(q^{k+h}-1,q^{k+h}-q^h).\end{eqnarray*}
The formula (2) of Theorem \ref{formulas} and especially the formula
of Theorem \ref{formulaBG} can be seen as some kind of analogue 
of Euler's formula $\zeta(3)=\zeta(2,1)$ (for classical Euler multiple zeta values).}
\end{Remark}

\subsubsection{A degree computation}
In contrast with the universal formulas obtained in \cite{PEL&PER} for the sums 
$F_d(n;s)$ in the case $s\equiv n\pmod{q-1}$, there seems to be no such a formula for $\operatorname{BG}_{q^d-2}$, for $d\geq 1$. At least, we have a ``universal formula" for its degree, and this can
be deduced from (\ref{formulaBG}) as we are going to see in the next result, where 
it is supposed, again (as we did until now), that $q>2$.

\begin{Theorem}\label{exactdegree}
We have $$\deg_\theta(\operatorname{BG}_{q^d-2})=(d-1)q^d-\frac{2q(q^{d-1}-1)}{q-1}.$$
\end{Theorem}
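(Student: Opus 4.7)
The plan is to start from the finite-sum representation
\[
\operatorname{BG}_{q^d-2}=\sum_{k=0}^{d-1}S_k(2;\sigma)\big|_{t_1=\theta^{q^d}}
\]
already established during the proof of Theorem \ref{formulaBG}: the terms with $k\geq d$ specialize to zero since formula (\ref{f2}) carries the factor $t_1-\theta^{q^k}$ for $k=d$ and $b_k(t_1)$ carries the factor $t_1-\theta^{q^d}$ for $k>d$. Using the identity $S_k(2;\sigma)=\tau(b_k(t_1))/l_k^2$ noted in the proof of Theorem \ref{formulas}\,(2), the Frobenius relation
\[
\theta^{q^d}-\theta^{q^i}=(\theta^{q^{d-i}}-\theta)^{q^i},
\]
and the shorthand $L_m:=\theta^{q^m}-\theta$, each summand specializes to a monomial in the $L_m$:
\[
S_k(2;\sigma)\big|_{t_1=\theta^{q^d}}=\frac{\prod_{i=1}^{k}L_{d-i}^{q^i}}{\prod_{j=1}^{k}L_j^{2}},\qquad 0\leq k\leq d-1,
\]
where we have used $l_k^2=\prod_{j=1}^{k}L_j^2$.

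The crux of the argument is then a degree computation followed by a monotonicity check. Since $\deg L_m=q^m$, the above expression has degree
\[
h(k):=kq^d-\tfrac{2q(q^k-1)}{q-1},
\]
and $h(k+1)-h(k)=q^d-2q^{k+1}=q^{k+1}(q^{d-k-1}-2)$, which is strictly positive for $k\leq d-2$ under the standing hypothesis $q>2$. Hence $h$ is strictly increasing on $\{0,1,\ldots,d-1\}$, the unique summand realizing the maximum is $S_{d-1}(2;\sigma)|_{t_1=\theta^{q^d}}$, and no cancellation at the top degree can occur.

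To close the argument, reindex $m=d-i$ in the numerator to obtain the clean form
\[
S_{d-1}(2;\sigma)\big|_{t_1=\theta^{q^d}}=\prod_{m=1}^{d-1}L_m^{q^{d-m}-2}.
\]
Every exponent $q^{d-m}-2$ is strictly positive because $q>2$, so the right-hand side is a genuine polynomial in $\theta$; being a product of monic polynomials, it is itself monic of degree $h(d-1)=(d-1)q^d-\tfrac{2q(q^{d-1}-1)}{q-1}$. Combined with the previous paragraph, this shows that $\operatorname{BG}_{q^d-2}$ is monic of this exact degree. The only step genuinely using $q>2$ is the monotonicity of $h$ at the border $k=d-2\to d-1$—matching precisely the restriction imposed in the statement—and all other steps are routine Frobenius bookkeeping.
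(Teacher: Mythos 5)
Your argument is correct, but it reaches the degree formula by a genuinely different route than the paper. The paper starts from the right-hand side of identity (2), namely the double sum $(\ref{formulaBG})$, $\operatorname{BG}_{q^d-2}=-\sum_{d\geq i>j\geq 0}b_i(\theta^{q^d})/(l_il_j)$; because the terms indexed by $(d,j)$ and $(d-1,j)$ there have \emph{equal} degrees, the paper must first prove a lemma classifying all coincidences among the degrees $\delta_{i,j}$, then regroup the sum into three blocks $U+V+W$, computing $\alpha_d+\alpha_{d-1}$ explicitly to control the block where cancellation of top terms occurs. You instead start from the left-hand side, writing $\operatorname{BG}_{q^d-2}=\sum_{k=0}^{d-1}S_k(2;\sigma)|_{t_1=\theta^{q^d}}$ (legitimate: the specialization of the depth-one series was justified in the proof of $(\ref{formulaBG})$, the terms with $k\geq d$ vanish for the reasons you give, and $S_k(2;\sigma)=\tau(b_k(t_1))/l_k^2$ is recorded in the proof of Theorem \ref{formulas}(2)). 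The payoff is that the $d$ specialized terms $\prod_{i=1}^{k}L_{d-i}^{q^i}/\prod_{j=1}^{k}L_j^{2}$ have pairwise distinct, strictly increasing degrees $h(k)$ once $q>2$, so no cancellation analysis is needed at all; moreover you get for free that $\operatorname{BG}_{q^d-2}$ is \emph{monic}, with explicit dominant part $\prod_{m=1}^{d-1}(\theta^{q^m}-\theta)^{q^{d-m}-2}$, a refinement the paper's proof also yields implicitly (its term $-U=-b_d(\theta^{q^d})/(l_dl_{d-1})$ equals your top term) but does not state. All your computations check out — the identification $\theta^{q^d}-\theta^{q^i}=(\theta^{q^{d-i}}-\theta)^{q^i}$, the value $h(k)=kq^d-\tfrac{2q(q^k-1)}{q-1}$, and the increment $h(k+1)-h(k)=q^d-2q^{k+1}>0$ for $k\leq d-2$ — and the hypothesis $q>2$ enters in the same essential way as in the paper (there, to guarantee $\deg U>\deg V,\deg W$).
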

This result should be compared with more classical degree computations by Wan, Diaz-Vargas, Poonen, 
Sheats, as well as B\"ockle's \cite[Theorem 1.2]{BOE} where the interested reader can find all the 
necessary references to the work of these authors on this topic. 

Theorem \ref{exactdegree} seems to be new.
Thomas \cite[Theorem 2]{THO} already proved an explicit formula to compute,
not only the degree of $\operatorname{BG}_{q^d-2}$ for $d>0$ but also 
the degree of $\operatorname{BG}_{n}$ for any $n>0$ with $q-1\nmid n$ in case $q$ is a prime number (\footnote{He obtained
a more general result in this direction, also involving ``first derivatives" of the Goss zeta function of $A$ at
its ``trivial zeroes," the negative integers divisible by $q-1$.}). It follows from Thomas' Theorem 1 and Corollary 1  ibid.
that if $q-1\nmid n$, and $q$ is a prime, then $\operatorname{BG}_{n}=1$ if and only if $\ell_q(n)<q$,
where $\ell_q(n)$ is the sum of the digits of the base-$q$ expansion of $n$.
However, Thomas formula contains an iterative process and for this reason, the identity of Theorem 
\ref{exactdegree} is not immediately recognizable in it, and even if it was, it would have been valid only for $q$ a prime number. Bruno Angl\`es has communicated to the author a simple proof of 
Theorem \ref{exactdegree} in the case of $q=p>2$ a prime number by using Sheats' method. Also, Dinesh Thakur has pointed out to the author that this result, for general $q$,
can be more simply deduced from an application of his {\em duality formula} \cite[Theorem 2, (5)]{THA3}.

\begin{Remark}{\em 
Note that for all $d\geq 1$, $q^d-2$ is a {\em dual magic number} in the sense of \cite[\S 5.7]{GOS?} (see also \cite{GOS}). In this paper,
Goss points out a result of Thomas which exhibits a computation of the degree of 
$\operatorname{BG}_{n}$ when $q-1\nmid n$ and when $n$ is a {\em magic number} (\cite[\S 8.22]{GOS}), in terms of 
the the degree of the Carlitz factorial.
Our computation
involves certain dual magic numbers which are not magic numbers, and this could also be a new instance of
the conjectural functional equation for the Goss zeta function associated to the algebra $A$.}
\end{Remark}

More results of the type of Theorem \ref{exactdegree} can be obtained from more general consequences of the sum shuffle 
relations for our multi-zeta values in the Tate algebras, but they will be described in
another work (the present paper can be considered as a first of more general results that will appear elsewhere). Before proving the Theorem, we need some notation and Lemmas.
For commodity, we set
$$\alpha_i=\frac{b_i(\theta^{q^d})}{l_i},\quad \beta_j=l_j^{-1},$$
so that the formula (\ref{formulaBG}) rewrites as
$$\operatorname{BG}_{q^d-2}=-\sum_{d\geq i>j\geq 0}\alpha_i\beta_j.$$
Then, we have
\begin{equation}\label{formulaalphabeta}
\delta_{i,j}:=\deg_\theta(\alpha_i\beta_j)=iq^d-\sum_{n=1}^iq^n-\sum_{m=1}^{j}q^m.
\end{equation} We recall the convention that an empty sum is zero. Moreover, the 
degree of $0$ in $\theta$ is set to be $-\infty$.
We have the following Lemma.
\begin{Lemma} Assuming that $d\geq i>j\geq 0$, $d\geq i'>j'\geq 0$, we have that
$$\delta_{i,j}=\deg_\theta(\alpha_i\beta_j)=\deg_\theta(\alpha_{i'}\beta_{j'})=\delta_{i',j'}$$
if and only if the following cases occur.
\begin{enumerate}
\item $i=i',j=j'$,
\item $i=d$, $i'=d-1$ and $j=j'$,
\item $i'=d,i=d-1$ and $j=j'$.
\end{enumerate}
\end{Lemma}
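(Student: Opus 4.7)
The ``if'' implication is a direct verification: cases (2) and (3) amount to $\deg_\theta\alpha_d = \deg_\theta\alpha_{d-1}$, which is the telescoping identity $dq^d - \sum_{n=1}^d q^n = (d-1)q^d - \sum_{n=1}^{d-1}q^n$, and case (1) is trivial.

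For the converse, the plan is to rewrite the hypothesis $\delta_{i,j}=\delta_{i',j'}$ as an integer equation in powers of $q$ and then to exploit $q$-adic valuations. Multiplying through by $q-1$ and using $\sum_{n=1}^k q^n = (q^{k+1}-q)/(q-1)$, then dividing by $q(q-1)$ and telescoping, the equality becomes
$$(i-i')\,q^{d-1} \;=\; \sum_{n=i'}^{i-1} q^n \;+\; \varepsilon \!\!\sum_{n=\min(j,j')}^{\max(j,j')-1}\!\! q^n,\qquad \varepsilon = \operatorname{sign}(j-j').$$
By the symmetry $(i,j)\leftrightarrow(i',j')$, I may assume $u := i-i' \ge 0$. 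When $u=0$ the right-hand side vanishes, forcing $j=j'$; this is case (1). When $u\ge 1$ and $j=j'$, the right-hand side equals $q^{i'}(1+q+\cdots+q^{u-1})$, of $q$-adic valuation $i'$; matching with $v_q(uq^{d-1})=d-1+v_q(u)$ and using $i'\le d-1$ forces $v_q(u)=0$, hence $u=1$, so $(i,i')=(d,d-1)$. This yields case (2), with (3) given by the swapped labelling.

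The main work, and the step I expect to require the most attention, is the remaining case $u\ge 1$ with $j\ne j'$. The constraints give $j'<i'\le d-u\le d-1$ and $j<i\le d$, so $\min(i',j,j')\le d-2$. A direct $q$-adic computation then shows that the right-hand side has valuation exactly $\min(i',j,j')$: when $j>j'$ the numerator $q^i-q^{i'}+q^j-q^{j'}$ of $(q-1)\cdot\mathrm{RHS}$ has terms of valuations $i,i',j,j'$ with a unique minimum at $j'$; when $j<j'$ the unique minimum is at $j$. In either sub-case $v_q(\mathrm{RHS})\le d-2$ while $v_q(uq^{d-1})\ge d-1$, the required contradiction. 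The only delicate point is the overlapping configuration $j'<i'\le j$: some ``digits'' of the right-hand side are then equal to $2$, but since carries propagate only upward, the minimum active position $j'$ is unchanged, and the valuation identification survives.
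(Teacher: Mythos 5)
Your proof is correct, but its engine is different from the paper's. Both arguments start from the explicit formula $\delta_{i,j}=iq^d-\sum_{n=1}^iq^n-\sum_{m=1}^jq^m$ and reduce the question to an identity among powers of $q$; the paper then argues by \emph{size}, writing $\delta_{i,j}-\delta_{i',j'}=(i-i')q^d-\psi_{i,i',j,j'}$ and using that $\psi$ has base-$q$ digits in $\{0,1,2\}$ (this is where $q>2$ enters) to get $0\le\psi<q^{i+1}$, so the difference is strictly positive whenever $i<d$, after which the same estimate is run once more in the boundary case $i=d$ to force $i'=d-1$ and $j=j'$. You argue by \emph{divisibility}: the left-hand side $uq^{d-1}$ of your normalized equation is divisible by $q^{d-1}$, while for $j\ne j'$ the right-hand side has $q$-adic valuation exactly $\min(i',j,j')\le d-2$, because that exponent occurs in exactly one of the four monomials $q^i,-q^{i'},q^j,-q^{j'}$ (this uniqueness is all you need; the remark about digits equal to $2$ and carries is superfluous, since with a unique minimal exponent no cancellation can reach it). The two routes are equally elementary; yours localizes the obstruction at the bottom base-$q$ digit, the paper's at the top, and yours avoids the explicit two-sided bound on $\psi$. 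One step should be stated more carefully: in the sub-case $u\ge1$, $j=j'$, the inference ``$v_q(u)=0$, hence $u=1$'' is not valid on its own ($u=2$ also has $v_q(u)=0$ when $p\ne2$); what saves you is that the valuation matching $i'=d-1+v_q(u)$ together with $i'\le d-1$ forces $i'=d-1$ as well, and then $u=i-i'\le d-(d-1)=1$. With that line added, the argument is complete.
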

\begin{proof}
For symmetry of the roles of $i,i'$, we can assume that $i\geq i'$.
First of all, if $i=i'$ we have that
$$\delta_{i,j}-\delta_{i',j'}=\sum_{m'=1}^{j'}q^{m'}-\sum_{m=1}^{j}q^m,$$
which equals zero if and only if $j=j'$; if $i=i'$, $\delta_{i,j}=\delta_{i',j'}$ if and only if $j=j'$. Now, let us suppose that $i>i'$. From 
(\ref{formulaalphabeta}) we deduce that
$$\delta_{i,j}-\delta_{i',j'}=(i-i')q^d-\psi_{i,i',j,j'},$$
where $$\psi_{i,i',j,j'}=\sum_{n=1}^iq^{n}-\sum_{n'=1}^{i'}q^{n'}+\sum_{m=1}^{j}q^{m}-\sum_{m'=1}^{j'}q^{m'}\in\ZZ.$$
Since $i>i'>j'$ and $i>j$, we have that $\psi_{i,i',j,j'}\geq 0$ and we can find 
integers $c_r\in\{0,1,2\}$, unique, such that 
$\psi_{i,i',j,j'}=\sum_{r=0}^ic_rq^r,$ so we see that $$0\leq \psi_{i,i',j,j'}< q^{i+1}$$
(recall that $q>2$). If $i<d$, we see that
$$\delta_{i,j}-\delta_{i',j'}=(i-i')q^d-\psi_{i,i',j,j'}> q^d-q^{i+1}\geq0$$ so that $\delta_{i,j}\neq\delta_{i',j'}$
in this case. It remains to study the case in which 
$i=d$. In this case, $j'<i'\leq d-1$ and we can write:
$$\delta_{i,j}-\delta_{i',j'}=(d-i'-1)q^d-\rho_{i',j,j'},$$
where 
$$\rho_{i',j,j'}=\sum_{n=i'+1}^{d-1}q^{n}+\sum_{m=1}^{j}q^{m}-\sum_{m'=1}^{j'}q^{m'}\in\ZZ.$$
This number is obviously $\geq 0$ and the following estimate holds 
$$0\leq\rho_{i',j,j'}<q^{d}.$$ If $i'\leq d-2$, we thus have that $\delta_{d,j}>\delta_{i',j'}$
for any choice of $j<d$ and $j'<i'$. If $i'=d-1$, we have 
$$\delta_{i,j}-\delta_{i',j'}=\sum_{m'=1}^{j'}q^{m'}-\sum_{m=1}^{j}q^{m}$$
which equals zero if and only if $j=j'$.
\end{proof}
In view of the above Lemma, to compute the degree of $\operatorname{BG}_{q^d-2}$, we rearrange the sum (\ref{formulaBG}) in the following way:
\begin{eqnarray*}
\operatorname{BG}_{q^d-2}&=&-\overbrace{\alpha_d\beta_{d-1}}^U-\underbrace{(\alpha_d+\alpha_{d-1})\sum_{j=0}^{d-2}\beta_j}_V-\underbrace{\sum_{i=0}^{d-2}\sum_{j=0}^{i-1}\alpha_i\beta_j}_W\\
&=:&-(U+V+W).
\end{eqnarray*}
\begin{Lemma}\label{degreesUVW}
We have:
\begin{enumerate}
\item $\deg_\theta(U)=(d-1)q^d-2(q+\cdots+q^{d-1})$,
\item $\deg_\theta(V)=(d-2)q^d-(q+\cdots+q^{d-2})$,
\item $\deg_\theta(W)=(d-2)q^d-(q+\cdots+q^{d-2})$.
\end{enumerate}
\end{Lemma}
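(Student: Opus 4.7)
The plan is to handle the three parts in turn: part (1) is a direct instantiation of formula (\ref{formulaalphabeta}), part (3) follows from the uniqueness statement of the preceding Lemma, and part (2) is the genuinely delicate one because of an unavoidable leading-term cancellation.

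For part (1), $U=\alpha_d\beta_{d-1}$ is a single product, so $\deg_\theta(U)=\delta_{d,d-1}=dq^d-(q+\cdots+q^d)-(q+\cdots+q^{d-1})=(d-1)q^d-2(q+\cdots+q^{d-1})$, with no possibility of cancellation. For part (3), the preceding Lemma guarantees that the numbers $\delta_{i,j}$ for $0\leq j<i\leq d-2$ are pairwise distinct (the coincidences allowed by that Lemma all require $i$ or $i'$ to lie in $\{d-1,d\}$). Consequently no leading terms can cancel in $W=\sum_{i=0}^{d-2}\sum_{j=0}^{i-1}\alpha_i\beta_j$, so $\deg_\theta(W)$ equals $\max_{0\leq j<i\leq d-2}\delta_{i,j}$. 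A monotonicity check in $i$ and $j$ identifies $(i,j)=(d-2,0)$ as the unique maximizer (for fixed $i$, the minimum of the second sum is attained at $j=0$; then $\delta_{i+1,0}-\delta_{i,0}=q^d-q^{i+1}>0$ for $i\leq d-2$), giving $\deg_\theta(W)=(d-2)q^d-(q+\cdots+q^{d-2})$.

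The main obstacle is part (2): inspecting leading coefficients one finds $\deg_\theta(\alpha_d)=\deg_\theta(\alpha_{d-1})=(d-1)q^d-(q+\cdots+q^{d-1})$ with leading coefficients $(-1)^d$ and $(-1)^{d-1}$ respectively, so the top terms cancel. To get around this, I would exploit the recursions $b_d(Y)=b_{d-1}(Y)(Y-\theta^{q^{d-1}})$ and $l_d=l_{d-1}(\theta-\theta^{q^d})$ to derive the multiplicative relation
\[
\alpha_d=\alpha_{d-1}\cdot\frac{\theta^{q^d}-\theta^{q^{d-1}}}{\theta-\theta^{q^d}},
\]
whence a direct addition gives
\[
\alpha_d+\alpha_{d-1}=\alpha_{d-1}\cdot\frac{\theta-\theta^{q^{d-1}}}{\theta-\theta^{q^d}}.
\]
Taking $\theta$-degrees then yields $\deg_\theta(\alpha_d+\alpha_{d-1})=\deg_\theta(\alpha_{d-1})+q^{d-1}-q^d=(d-2)q^d-(q+\cdots+q^{d-2})$. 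Finally, since the degrees of $\beta_j=l_j^{-1}$ are strictly decreasing in $j$, the sum $\sum_{j=0}^{d-2}\beta_j$ is dominated by $\beta_0=1$ and hence has degree $0$ with nonzero leading coefficient; multiplication by $\alpha_d+\alpha_{d-1}$ therefore preserves the degree, giving $\deg_\theta(V)=(d-2)q^d-(q+\cdots+q^{d-2})$.
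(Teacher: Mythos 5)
Your proposal is correct and follows essentially the same route as the paper: part (1) is the direct degree count, part (3) uses the non-cancellation guaranteed by the preceding Lemma together with identifying $(i,j)=(d-2,0)$ as the maximizer, and part (2) hinges on the same closed form $\alpha_d+\alpha_{d-1}=\alpha_{d-1}\cdot\frac{\theta-\theta^{q^{d-1}}}{\theta-\theta^{q^d}}=\frac{(\theta-\theta^{q^{d-1}})\,b_{d-1}(\theta^{q^d})}{l_d}$, which the paper obtains by clearing denominators rather than by factoring out $\alpha_{d-1}$. The two derivations of that identity are interchangeable, and the rest (the sum $\sum_j\beta_j$ having degree $0$) matches the paper exactly.
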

\begin{proof}
We compute the degree of $U$:
$$\deg_\theta(U)=dq^d-\sum_{n=1}^dq^n-\sum_{m=1}^{d-1}q^m=(d-1)q^d-2\sum_{n=1}^{d-1}q^n.$$
To compute the degree of $V$, we observe:
\begin{eqnarray*}
\alpha_d+\alpha_{d-1}&=&\frac{b_d(\theta^{q^d})}{l_d}-\frac{b_{d-1}(\theta^{q^d})}{l_{d-1}}\\
&=&\frac{b_d(\theta^{q^d})-(\theta^{q^d}-\theta)b_{d-1}(\theta^{q^d})}{l_d}\\
&=&\frac{b_{d-1}(\theta^{q^d})[\theta^{q^d}-\theta^{q^{d-1}}-\theta^{q^d}+\theta]}{l_d}\\
&=&\frac{(\theta-\theta^{q^{d-1}})b_{d-1}(\theta^{q^d})}{l_d}.
\end{eqnarray*}
Hence, 
\begin{eqnarray*}
\deg_\theta(V)&=&\deg_\theta(\alpha_d+\alpha_{d-1})+\deg_\theta\left(\sum_{j=0}^{d-2}\beta_j\right)\\
&=&\deg_\theta(\alpha_d+\alpha_{d-1})\\
&=&q^{d-1}+(d-1)q^d-\sum_{n=1}^dq^n\\
&=&(d-2)q^d-\sum_{n=1}^{d-2}q^n.
\end{eqnarray*}
To compute the degree of $W$, we first notice, by Lemma \ref{formulaalphabeta}, 
that all the terms involved in the double sum have different degrees. The term with the
largest degree is the one corresponding to $i=d-2$ and $j=0$, which is equal to 
$\alpha_{d-2}$, and which has the expected degree.
\end{proof}

\begin{proof}[Proof of Theorem \ref{exactdegree}]
By Lemma \ref{degreesUVW} and by the assumption $q>2$, we have $$\deg_\theta(U)>
\deg_\theta(V),\deg_\theta(W),$$
and 
$$\deg_\theta(\operatorname{BG}_{q^d-2})=\deg_\theta\left(\frac{b_d(\theta^{q^d})}{l_dl_{d-1}}\right)=
(d-1)q^d-\frac{2q(q^{d-1}-1)}{q-1}.$$
\end{proof}
\begin{Remark}{\em 
Angl\`es and Ould Douh have proved, in \cite{ANG&DOU}, that
there exist infinitely many irreducible elements of $A^+$ such that
$\operatorname{BG}_{|P|-2}\not\equiv0\pmod{P}$ (recall that $|P|=q^d$ in (\ref{formulaBG})
and that $q>2$ all along the present note). 
As a consequence we see, by the fact that $$F_{\deg_\theta(P)}(1;\sigma)\not\equiv0\pmod{P}$$ 
for all irreducible $P$ of $A^+$ (easily checked), that the right-hand sides of (\ref{Fsfirst}) and (\ref{lastone}) determine non-zero elements 
of the ring $\mathcal{A}_s$ defined in \cite{PEL&PER}. This result is an easy consequence 
of their formula  that we have re-obtained in our Corollary \ref{TAOD}. 

Let us recall the elegant proof of this property in \cite{ANG&DOU}. Since
$\operatorname{BG}_{q^d-2}\equiv\sum_{i=0}^{d-1}l_i^{-1}\pmod{P}$ (for $P$ irreducible of
degree $d$), we have $\operatorname{BG}_{q^d-2}\equiv0\pmod{P}$ if and only if
$P$ divides the polynomial $$V(d)=l_{d-1}\sum_{i=0}^{d-1}l_i^{-1}\in A$$ which has degree
$\sum_{n=1}^{d-1}q^n=\frac{q^d-q}{q-1}$, so that we have at most $$\frac{q^d-q}{d(q-1)}$$
monic irreducible polynomials $P$ of degree $d$ dividing $V(d)$.
Now, the number of monic irreducible polynomials $P$ of degree $d$ is equal to the 
 the necklace
polynomial (where $\mu$ designates Moebius' function) $$M_d(q)=\frac{1}{d}\sum_{l\mid d}\mu(l)q^{\frac{d}{l}},$$ which is known to have an asymptotic behavior, as $d\rightarrow\infty$, which is of a strictly bigger magnitude than that of
the above fraction if $q>2$.
For example, if $d=p'$ is a prime
number, the necklace polynomial $M_{p'}(q)$ equals $\frac{q^{p'}-q}{p'}$
and we have 
$$\frac{q^{p'}-q}{p'}>\frac{q^{p'}-q}{p'(q-1)},$$ because $q>2$.

The formula (\ref{formulaBG}) does not seem to immediately imply the result of Angl\`es and Ould Douh
(without using the intermediate congruence with the polynomial $V(d)$), 
but we have not tried to rearrange the terms of the sum completely.}\end{Remark}

\section{Looking for more relations}

We gave above some hints of a variant of the shuffle product for the 
multiple zeta values:
$$\zeta_C\left(\begin{matrix}\sigma_1 & \sigma_2 & \cdots & \sigma_r\\
n_1 & n_2 & \cdots & n_r\end{matrix}\right)$$ in the simplest non-trivial cases (weight $2$).
We shall complete our note by suggesting some other tools to develop, in order to 
compute other kinds of relations. 

We denote by $K\{\tau\}$ the 
skew polynomial ring of finite sums $\sum_ic_i\tau^i$, with $c_i\in K$, with the 
non-commutative product uniquely determined by the rule $\tau c=c^q\tau$
for $c\in K$. Additionally, let $t$ be a variable (we can set $t=t_1$ to get compatibility with the first part of the note). We have an isomorphism of $K$-vector spaces:
$$K[t]\xrightarrow{\eta} K\{\tau\}$$
defined by $\eta(t^i)=C_{\theta^i}=(\theta+\tau)^i$ for $i>0$ and $\eta(1)=1$.
Here $C_\theta=\theta+\tau$ is the multiplication by $\theta$ of the Carlitz module.
The inverse of this isomorphism sends $1$ to $1$ and, for $j>0$, $\tau^j$ to $b_j(t)$, where we recall that
$$b_j(t)=(t-\theta)\cdots(t-\theta^{q^{j-1}}).$$
To check that $\eta$ is an isomorphism, one uses the {\em evaluation} 
at the Anderson-Thakur function. The evaluation $f(\omega)$ of an element 
$f=f_0+f_1\tau+\cdots+f_r\tau^r\in K\{\tau\}$ at $\omega$ is by definition
the expression $(f_0+f_1b_1+\cdots+f_rb_r)\omega$. It is easy to see that $C_a(\omega)=a(t)\omega$, so that, for all $f(t)\in K[t]$, we have 
$$\eta(f)(\omega)=f(t).$$

This isomorphism $\eta$ is useful to construct certain identities for finite sums. We recall, as a first example, the formula
(\ref{e2}):
$$\label{verysimple}S_d(1;1)=S_d(1;\sigma)=\sum_{a\in A^+(d)}a^{-1}a(t)=\frac{b_d(t)}{l_d},\quad d\geq 0.$$

It is easy to show that $\eta(a(t))=C_a\in K\{\tau\}$. Therefore, the isomorphism 
$\eta$ yields the identity:
$$\eta(S_d(1;1))=\sum_{a\in A^+(d)}a^{-1}C_a=l_d^{-1}\tau^d,\quad d\geq 0.$$

This picture can be generalized. We can use variables $t_1,\ldots,t_s$, indeterminates
$\tau_1,\ldots,\tau_s$, the rings $K[t_1,\ldots,t_s]$ (commutative) and 
$K\{\tau_1,\ldots,\tau_s\}$ (non commutative, with multiplication rules: $\tau_i\tau_j=\tau_j\tau_i$ and $\tau_ic=c^q\tau_i$ for $c\in K$), and the isomorphism
$$K[t_1,\ldots,t_s]\xrightarrow{\eta}K\{\tau_1,\ldots,\tau_s\}$$
uniquely defined by $\eta(t_i^j)=(\theta+\tau_i)^j$ (we write $\eta$ instead of the more precise expression $\eta_s$ we should have used,
to simplify our notations).
Then, any time we can show a formula for power sums in $K[t_1,\ldots,t_s]$, we obtain a similar formula in the ring $K\{\tau_1,\ldots,\tau_s\}$.

Florent Demeslay proved, in his Thesis \cite{DEM}, the following result.
\begin{Theorem} Assume that $s>0$.
There exists a rational fraction $Q_{k,s}\in K(t_1,\ldots,t_s)(Y)$ such that
$$S_d(k;s)=\frac{b_d(t_1)\cdots b_d(t_s)}{l_d}Q_{k,s}(\theta^{q^{d-m}}),\quad d\geq 0$$
where $m=\max\{0,\lfloor\frac{s-1}{q-1}\rfloor\}$.
\end{Theorem}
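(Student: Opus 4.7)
The plan is to reduce the statement to the base case $k = 1$, which is covered (and extended to arbitrary $s \geq 1$) by Theorem \ref{Theorem2}. Two elementary tools bridge the cases: a Frobenius-type twist $\phi:\theta \mapsto \theta^q$ acting on $K$, and a specialization of extra $t$-variables to $\theta$.

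First, I would pick an integer $r$ with $q^r \geq k$ and set $s' := s + q^r - k$. For every $a \in A^+$, $a^{q^r - k} = a(\theta)^{q^r - k} = a(t_{s+1}) \cdots a(t_{s'})\big|_{t_{s+1} = \cdots = t_{s'} = \theta}$. Multiplying by $a(t_1) \cdots a(t_s)/a^{q^r}$ and summing over $a \in A^+(d)$ yields
$$S_d(k;s) = S_d(q^r;s')\big|_{t_{s+1} = \cdots = t_{s'} = \theta}.$$
Now let $\phi$ be the $\FF_q(\underline{t}_{s'})$-algebra endomorphism of $K(\underline{t}_{s'})$ with $\phi(\theta) = \theta^q$. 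Since $a \in \FF_q[\theta]$ and $a(t_i) \in \FF_q[t_i]$, one has $\phi(a) = a^q$ and $\phi(a(t_i)) = a(t_i)$, whence $S_d(q^r;s') = \phi^r(S_d(1;s'))$.

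For the base case, the extension of Theorem \ref{Theorem2} to every $s' \geq 1$ from \cite{PEL&PER} provides
$$S_d(1;s') = \frac{b_d(t_1)\cdots b_d(t_{s'})}{l_d}\,G\bigl(\theta^{q^{d-m'}}\bigr), \quad G \in K(\underline{t}_{s'})(Y),$$
with $m' = \max\{0, \lfloor(s'-1)/(q-1)\rfloor\}$; this is obtained from $S_d(1;s') = F_{d+1}(1;s') - F_d(1;s')$ by factoring $b_d(t_1) \cdots b_d(t_{s'})/l_d$ out of the Pellarin--Perkins closed form, using $l_d = (\theta - \theta^{q^d}) l_{d-1}$ and $b_d(t_i)/b_{d-m'}(t_i) = \prod_{\ell=0}^{m'-1}(t_i - \theta^{q^{d-m'+\ell}})$. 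Applying $\phi^r$ and using $\phi^r(l_d) = l_d^{q^r}$ together with $\phi^r(b_d(t_i)) = b_d(t_i) \prod_{j=d}^{d+r-1}(t_i - \theta^{q^j})/b_r(t_i)$, the latter product being polynomial in $Y = \theta^{q^{d-m}}$ since $\theta^{q^j} = Y^{q^{j-d+m}}$, transforms the formula accordingly. After specializing $t_{s+1}, \ldots, t_{s'}$ to $\theta$, each specialized $\phi^r(b_d(t_i))\big|_{t_i=\theta} = l_{d+r-1}/l_{r-1}$ contributes one factor of $l_d$ (times a polynomial in $Y$), so the $q^r - k$ specializations together produce $l_d^{q^r - k}$, combining with the $l_d^{q^r}$ in the denominator to leave the intended power of $l_d$.

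The hard part will be verifying that the shifted factor $\widetilde{G}(\theta^{q^{d - m' + r}})$, appearing after $\phi^r$ and specialization, remains a rational function of $Y$ in $K(\underline{t}_s)(Y)$ for the \emph{target} index $m = \max\{0, \lfloor(s-1)/(q-1)\rfloor\}$, which depends on $s$ rather than on $s'$. This reduces to the numerical inequality $m \geq m' - r$ (required to rewrite $\theta^{q^{d-m'+r}}$ as a polynomial in $Y$), which in turn depends on the interaction between $s' = s + q^r - k$, $r$, and $q$. For the smallest $r$ with $q^r \geq k$ the inequality generally holds; in the remaining cases one must exploit the explicit degree bounds on $\mathbb{H}_{s'}$ from Theorem \ref{Theorem2} (polynomial of controlled degree in $Y$ and in each $t_i$) to absorb the apparent fractional-power obstruction into a genuine polynomial in $Y$. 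Combined with the $l_d$-exponent bookkeeping, this delivers the claimed $Q_{k,s} \in K(\underline{t}_s)(Y)$.
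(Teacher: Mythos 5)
The paper offers no proof of this statement at all --- it is quoted from Demeslay's thesis \cite{DEM} --- so your argument can only be assessed on its own terms. (A preliminary remark: as printed, the formula is inconsistent with the paper's own example $Q_{q,1}=\frac{t-Y}{t-\theta}$, since $S_d(q;1)=\phi(b_d(t)/l_d)=b_{d+1}(t)/\bigl((t-\theta)l_d^{q}\bigr)$; the denominator must read $l_d^{k}$, not $l_d$. Your computation does land on $l_d^{k}$, which you call ``the intended power'' without flagging the discrepancy.) Your two reduction devices are sound and are exactly the ones the paper uses in its weight-two computations: specializing extra variables to $\theta$ (as in the passage from $F_{d+1}(q;q)$ to $F_{d+1}(2;2)$) and the Frobenius twist $\theta\mapsto\theta^{q}$. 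The identities $S_d(k;s)=S_d(q^{r};s')|_{t_{s+1}=\cdots=t_{s'}=\theta}$, $S_d(q^{r};s')=\phi^{r}(S_d(1;s'))$, $\phi^{r}(l_d)=l_d^{q^{r}}$ and $\phi^{r}(b_d(t))|_{t=\theta}=l_{d+r-1}/l_{r-1}$ are all correct.

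There are, however, two genuine gaps. First, your base case is the assertion that $S_d(1;s')$ has the required shape for \emph{every} $s'\geq 1$, but Theorem \ref{Theorem2} as quoted covers only $s'\equiv 1\pmod{q-1}$, and your reduction cannot reach that congruence class in general: both operations preserve $s-k$ modulo $q-1$ (indeed $s'-1\equiv s-k$), so $s'\equiv 1$ holds only when $s\equiv k\pmod{q-1}$. For $s\not\equiv k$ you are therefore assuming precisely the $k=1$ instance of the theorem being proved, with no supporting reference in the paper. Second, the inequality $m\geq m'-r$ that you yourself identify as the crux fails even for the smallest admissible $r$: take $q=3$, $k=10$, $s=1$, so $r=3$, $s'=18$, $m'=8$, $m=0$, and $m'-r=5>0=m$. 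The resulting expression involves $\theta^{q^{d-5}}$, which is a $q^{5}$-th root of $Y=\theta^{q^{d}}$ and not a rational function of it; the individual monomials of $\mathbb{H}_{s'}$ evaluated there are genuinely fractional powers of $Y$, so the promised ``absorption'' via degree bounds would require exhibiting large-scale cancellation --- that is not bookkeeping but the actual content of the theorem. As written, your argument establishes the (corrected) statement only in the favourable range where $s\equiv k\pmod{q-1}$ and $m\geq m'-r$.
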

The case $s=0$ (no variables) was already known to Anderson and Thakur \cite{AND&THA}.
We would like to apply this Theorem for $s>0$ to produce identities in the non-commutative
indeterminates $\tau_1,\ldots,\tau_s$ by means of the isomorphism $\eta$. 
For example, if $k=1$ and $s=1$, we are reduced to the formula (\ref{e2})
with $Q_{1,1}=1$. However, there is no reason to 
expect that $Q_{k,s}$ is a polynomial and in fact, in general, this is false.
For example, it is easy to check that $Q_{q,1}=\frac{t-Y}{t-\theta}$, which is not a polynomial.
It is of course possible to compute the rational fractions $Q_{k,s}$ by using the polynomials
$\mathbb{H}_s$ of Theorem \ref{Theorem2}, but even with that in mind, 
we cannot escape this problem.

A partial solution is given by Lemma \ref{lemmatau}.
We now denote by $\boldsymbol{\tau}$ (we do not want to mix it up with $\tau$ which is now an indeterminate!) the $\FF_q[t]$-algebra 
endomorphism of $K[t]$ defined by $\boldsymbol{\tau}(c)=c^q$. Then, Lemma \ref{lemmatau}
and induction imply the following result.
\begin{Proposition}\label{proposition4}
For all $n>0$ and $d\geq 0$, the following formula holds:
$$\boldsymbol{\tau}^n(b_d(t))=l_d^{q^{n-1}}\sum_{d\geq i_1\geq i_2\geq \cdots\geq i_{n-1}\geq i_n\geq 0}l_{i_1}^{q^{n-2}-q^{n-1}}l_{i_2}^{q^{n-3}-q^{n-2}}\cdots l_{i_{n-1}}^{1-q}l_{i_n}^{-1}b_{i_n}(t).$$
In particular, for all $n>0$ and $d\geq 0$:
$$S_d(q^n;1)=l_d^{q^{n-1}-q^n}\sum_{d\geq i_1\geq i_2\geq \cdots\geq i_{n-1}\geq i_n\geq 0}l_{i_1}^{q^{n-2}-q^{n-1}}l_{i_2}^{q^{n-3}-q^{n-2}}\cdots l_{i_{n-1}}^{1-q}l_{i_n}^{-1}b_{i_n}(t).$$
\end{Proposition}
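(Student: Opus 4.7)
The plan is a clean induction on $n$, with Lemma \ref{lemmatau} supplying both the base case and the engine of the inductive step. The corollary for $S_d(q^n;1)$ will then drop out from the identity $S_d(1;1)=b_d(t)/l_d$ together with the fact that $\boldsymbol{\tau}^n$ sends this power sum to $S_d(q^n;1)$.

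\textbf{Base case ($n=1$).} The formula reads $\boldsymbol{\tau}(b_d(t))=l_d\sum_{0\leq i_1\leq d}l_{i_1}^{-1}b_{i_1}(t)$ (the ``chain'' of intermediate $l$-factors being empty, and $l_d^{q^{n-1}}=l_d$), which is exactly Lemma \ref{lemmatau}, since $\boldsymbol{\tau}$ coincides with the $\tau$ of that lemma on $K[t]$.

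\textbf{Inductive step.} Assuming the stated identity for $n-1$, I apply $\boldsymbol{\tau}$ to both sides. Because $\boldsymbol{\tau}$ is an $\FF_q[t]$-algebra endomorphism with $\boldsymbol{\tau}(c)=c^q$ on $K$, each occurring factor $l_{i_k}^{e}$ becomes $l_{i_k}^{qe}$, so the exponent on $l_d^{q^{n-2}}$ rises to $q^{n-1}$ and, for each $1\leq k\leq n-2$, the exponent $q^{n-k-2}-q^{n-k-1}$ on $l_{i_k}$ rises to $q^{n-k-1}-q^{n-k}$, matching the desired shape. The final factor $l_{i_{n-1}}^{-1}b_{i_{n-1}}(t)$ from the $(n-1)$-formula becomes $l_{i_{n-1}}^{-q}\boldsymbol{\tau}(b_{i_{n-1}}(t))$; applying Lemma \ref{lemmatau} one more time turns this into
\[
l_{i_{n-1}}^{-q}\cdot l_{i_{n-1}}\sum_{i_n=0}^{i_{n-1}}l_{i_n}^{-1}b_{i_n}(t)=l_{i_{n-1}}^{1-q}\sum_{0\leq i_n\leq i_{n-1}}l_{i_n}^{-1}b_{i_n}(t),
\]
which is exactly the last two factors of the target formula, with the nested sum providing the extra index $i_n$ and the correct condition $i_{n-1}\geq i_n\geq 0$. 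Summing over $d\geq i_1\geq\cdots\geq i_{n-1}\geq i_n\geq 0$ completes the induction.

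\textbf{From $b_d(t)$ to $S_d(q^n;1)$.} Using $S_d(1;1)=b_d(t)/l_d$ (formula (\ref{e2})) and the observation that, for each $a\in A^+(d)$, the ``$a(t)$'' piece lies in $\FF_q[t]$ and is therefore fixed by $\boldsymbol{\tau}$, while $a^{-1}\in K$ is raised to the $q^n$-th power, I obtain
\[
\boldsymbol{\tau}^n(S_d(1;1))=\sum_{a\in A^+(d)}a^{-q^n}a(t)=S_d(q^n;1).
\]
Combining with the formula for $\boldsymbol{\tau}^n(b_d(t))$ just proved and dividing through by $l_d^{q^n}$ yields the exponent $l_d^{q^{n-1}-q^n}$ on the outside and leaves the rest of the expression unchanged, which is the claimed identity for $S_d(q^n;1)$.

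\textbf{Anticipated obstacle.} No genuine difficulty is expected; the only slightly delicate point is the bookkeeping of the telescoping exponents $q^{n-k-1}-q^{n-k}$ under multiplication by $q$, together with the cancellation $l_{i_{n-1}}^{-q}\cdot l_{i_{n-1}}=l_{i_{n-1}}^{1-q}$ that produces the correct exponent at the ``joint'' where the newly adjoined index $i_n$ is introduced. Writing out the $n=2$ case explicitly is a useful sanity check to make sure the chain of exponents and the nesting of inequalities line up before running the general induction.
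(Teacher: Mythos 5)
Your proof is correct and matches the paper's intent exactly: the paper gives no written proof beyond the remark that ``Lemma \ref{lemmatau} and induction imply the following result,'' and your argument carries out precisely that induction, with the passage to $S_d(q^n;1)$ via $\boldsymbol{\tau}^n(b_d(t)/l_d)$ being the intended deduction. The exponent bookkeeping and the use of Lemma \ref{lemmatau} at the innermost index all check out.
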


Although the rational fraction $Q_{q,1}$ and more generally the fractions $Q_{q^n,1}$ are certainly not polynomials, the above formulas in $K[t]$ can be transferred to identities in the ring $K\{\tau\}$. We obtain,
by applying the map $\eta$:
\begin{Corollary}\label{noncommide}
For all $d\geq 0$, 
$$\mathfrak{S}_d(q^n;1):=\sum_{a\in A^+(d)}a^{-q^n}C_a=l_d^{q^{n-1}-q^n}\sum_{d\geq i_1\geq i_2\geq \cdots\geq i_{n-1}\geq i_n\geq 0}l_{i_1}^{q^{n-2}-q^{n-1}}l_{i_2}^{q^{n-3}-q^{n-2}}\cdots l_{i_{n-1}}^{1-q}l_{i_n}^{-1}\tau^n.$$
\end{Corollary}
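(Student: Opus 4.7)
The plan is simply to transport the identity of Proposition \ref{proposition4}, which lives in $K[t]$, across the $K$-linear isomorphism $\eta:K[t]\to K\{\tau\}$ introduced in the preceding discussion. Proposition \ref{proposition4} is the substantial ingredient; the corollary is essentially its shadow in the non-commutative ring $K\{\tau\}$, obtained by reading off what happens to each factor under $\eta$.

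First I would record the two compatibilities of $\eta$ that the argument needs. On one side, for every $a\in A$, writing $a=\sum_ia_i\theta^i$ with $a_i\in\FF_q$, the $K$-linearity of $\eta$ together with the defining relation $\eta(t^i)=C_{\theta^i}=(\theta+\tau)^i$ gives
$$\eta(a(t))=\sum_ia_i\,C_{\theta^i}=C_a,$$
using the $\FF_q$-linearity of the Carlitz module. On the other side, the description of $\eta^{-1}$ recalled in the excerpt says that $\eta(b_j(t))=\tau^j$ for every $j\geq 0$, the case $j=0$ being compatible with the convention $b_0(t)=1$.

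Next I would apply $\eta$ termwise to the identity
$$S_d(q^n;1)=l_d^{q^{n-1}-q^n}\sum_{d\geq i_1\geq\cdots\geq i_n\geq 0}l_{i_1}^{q^{n-2}-q^{n-1}}\cdots l_{i_{n-1}}^{1-q}\,l_{i_n}^{-1}\,b_{i_n}(t)$$
supplied by Proposition \ref{proposition4}. All coefficients on the right lie in $K$ and so are preserved by $\eta$; each basis element $b_{i_n}(t)$ is carried to $\tau^{i_n}$; and on the left,
$$\eta(S_d(q^n;1))=\sum_{a\in A^+(d)}a^{-q^n}\eta(a(t))=\sum_{a\in A^+(d)}a^{-q^n}C_a=\mathfrak{S}_d(q^n;1).$$
Assembling these pieces yields precisely the claimed formula.

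There is really no obstacle here: all the algebraic content has already been packaged into Proposition \ref{proposition4}, and the role of the corollary is only to record that identities of this power-sum type in $K[t]$ translate verbatim into identities in $K\{\tau\}$ via $\eta$. The only points that need any care are the two compatibilities above and the bookkeeping of the exponents of the $l_i$, both of which are routine.
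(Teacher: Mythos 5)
Your argument is correct and is exactly the paper's: the paper deduces the corollary from Proposition \ref{proposition4} with the single remark ``We obtain, by applying the map $\eta$,'' which is precisely the termwise transport you carry out via $\eta(a(t))=C_a$ and $\eta(b_j(t))=\tau^j$. Note only that your (correct) computation produces $\tau^{i_n}$ inside the sum, whereas the statement as printed ends in $\tau^{n}$; this is a typo in the paper (already for $d=0$, $n=1$ the printed right-hand side equals $\tau$ while the left-hand side is $C_1=1$), so your derivation yields the intended formula rather than the literal one.
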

Let $\sigma_1,\ldots,\sigma_r$ be 
semi-characters, let $n_1,\ldots,n_r$ be integers, and $d$ a non-negative integer. 
We set, for convenience:
$$S_d^\star\left(\begin{matrix}\sigma_1 & \sigma_2 & \cdots & \sigma_r\\
n_1 & n_2 & \cdots & n_r\end{matrix}\right)=
S_{d}(n_1;\sigma_1)\sum_{d\geq  i_2\geq  \cdots\geq  i_r\geq 0}
S_{i_2}(n_2;\sigma_2)\cdots S_{i_r}(n_r;\sigma_r)\in \FF_q^{ac}\otimes_{\FF_q}K(\underline{t}_s)$$
(we have introduced non-strict inequalities in the sum).
Further, we set:
$$\zeta_C^\star\left(\begin{matrix}\sigma_1 & \sigma_2 & \cdots & \sigma_r\\
n_1 & n_2 & \cdots & n_r\end{matrix}\right):=\sum_{d\geq 0}S_d^*\left(\begin{matrix}\sigma_1 & \sigma_2 & \cdots & \sigma_r\\
n_1 & n_2 & \cdots & n_r\end{matrix}\right)\in \widehat{K_\infty\otimes_{\FF_q}\FF_q^{ac}\otimes_{\FF_q}\boldsymbol{F}_s}.$$
We observe that $S_d(j;\boldsymbol{1})=l_d^{-j}$ if $j=kq^l$ with $l\geq 0$ and $k=1,\ldots,q-1$.
Hence, the second identity of Proposition \ref{proposition4} can be rewritten, with $\sigma=\chi_t$, in the following
way:
$$S_d(q^n;\sigma)=S_d^\star\left(\begin{matrix}\boldsymbol{1} & \boldsymbol{1} & \cdots & \boldsymbol{1} &\sigma \\
q^{n-1}(q-1) & q^{n-2}(q-1) & \cdots & q-1 & 1 \end{matrix}\right).$$
Summing over $d=0,1,\ldots$, we obtain the 
formula:
$$\zeta_C(q^n;\sigma)=\zeta_C^\star\left(\begin{matrix}\boldsymbol{1} & \boldsymbol{1} & \cdots & \boldsymbol{1} &\sigma \\
q^{n-1}(q-1) & q^{n-2}(q-1) & \cdots & q-1 & 1 \end{matrix}\right).$$We observe that the evaluation at $t=\theta^{q^k}$ gives:
$$\underbrace{\ldots,\theta^{q^{n-1}}}_{\text{special values}\neq0},\underbrace{\theta^{q^n}}_{\text{value one}},\underbrace{\theta^{q^{n+1}},\theta^{q^{n+2}},\ldots}_{\text{trivial zeroes}}.$$
Evaluating e.g. at $t=\theta$ returns us the following identity, with the obvious 
meaning of the second sum:
$$\zeta_C(q^n-1)=\zeta_C^\star(\underbrace{q^{n-1}(q-1),q^{n-2}(q-1),\ldots,q-1}_{n\text{ terms}}).$$
We can rewrite the identity of our Corollary \ref{noncommide} as follows:
\begin{eqnarray*}
\lefteqn{\mathfrak{S}_d(q^n;1)=\sum_{a\in A^+(d)}a^{-q^n}C_a=}\\
&=&S_d(q^{n-1}(q-1))\sum_{d\geq i_1\geq i_2\geq \cdots\geq i_{n-1}\geq i_n\geq 0}S_{i_1}(q^{n-2}(q-1))S_{i_2}(q^{n-3}(q-1))\cdots S_{i_{n-1}}(q-1)S_{i_n}(1)\tau^n,
\end{eqnarray*}
with $S_d(n):=S_d(n;\boldsymbol{1})$.
If $f=f_0+f_1\tau+\cdots+f_r\tau^r\in K\{\tau\}$, the {\em evaluation at one} $f(1)$
of $f$ is the element $f_0+f_1+\cdots+f_r\in K$. It is easy to see that the series
$\sum_{d\geq 0}\sum_{a\in A^+(d)}a^{-q^n}C_a(1)$ converges in $K_\infty$.
We obtain the formula:
\begin{equation}
\sum_{d=0}^\infty\mathfrak{S}_d(q^n;1)(1)=\zeta^\star_C(\underbrace{q^{n-1}(q-1),q^{n-2}(q-1),\ldots,q-1}_{n\text{ terms}},1).
\end{equation}
These formulas can be easily related to Thakur's multiple zeta values $\zeta_C$ (without the $\star$ mark),
by means of simple manipulations. We illustrate this in the case $n=1$. We observe that
$$\sum_{d=0}^\infty\mathfrak{S}_d(q;1)(1)=\zeta^\star_C(q-1,1)=\zeta_C(q-1,1)+\sum_{i\geq 0}S_{i}(q-1;\boldsymbol{1})S_i(1,\boldsymbol{1}).
$$
Now, since $S_i(q-1;\boldsymbol{1})=l_i^{1-q}$ and $S_i(1;\boldsymbol{1})=l_i$, we get 
$$\sum_{i\geq 0}S_{i}(q-1;\boldsymbol{1})S_i(1;\boldsymbol{1})=\sum_{i\geq 0}l_i^{-q}=\log_C(1)^q,$$ where
$\log_C(z)=\sum_{i\geq 0}l_i^{-1}z^{q^i}$ is the {\em Carlitz logarithm} of $z\in\CC_\infty$,
well defined for $|z|<q^{q/(q-1)}$ (we recall that $|\cdot|$ is the unique norm
of $\CC_\infty$ such that $|\theta|=q$) and in particular, well defined at $z=1$. It is plain
that $\log_C(1)=\zeta_C(1)$, an identity which was, essentially, first noticed by Carlitz.
Thus we have that
$$\zeta^\star_C(q-1,1)=\zeta_C(q-1,1)+\zeta_C(1)^q.$$ 

The shuffle product of $\zeta_C(s_1)$ and $\zeta_C(s_2)$ yields, for $s_1,s_2\in\NN^+$
such that $s_1+s_2\leq q$ (see Thakur, \cite[Theorem 1]{THA1}), the simple formula:
$$\zeta_C(1)\zeta_C(q-1)=\zeta_C(q)+\zeta_C(q-1,1)+\zeta_C(1,q-1)=\zeta_C(1)^q+\zeta_C(q-1,1)+\zeta_C(1,q-1).$$ This means that 
$$\sum_{d=0}^\infty\mathfrak{S}_d(q;1)(1)=\zeta^\star_C(q-1,1)=\zeta_C(1)\zeta_C(q-1)-\zeta_C(1,q-1).$$
We do not know how to evaluate the sum $\sum_{d=0}^\infty\mathfrak{S}_d(q;1)(1)$
(and more generally, similar sums we do not want to introduce in this paper) directly, and it would be nice to develop a technique to do so independently of the shuffle product, in order to re-obtain the shuffle 
product formula.
Note also that Thakur demonstrated the formula (see \cite[Theorem 5]{THA1}):
$$\zeta_C(m,m(q-1))=\frac{\zeta_C(mq)}{(\theta-\theta^q)^m},\quad m=1,\ldots,q-1.$$
Hence, we compute easily, with $m=1$:
\begin{eqnarray*}
\zeta_C^\star(q-1,1)&=&\zeta_C(q-1,1)+\zeta_C(1)^q\\
&=&\zeta_C(q-1)\zeta_C(1)-\zeta_C(1,q-1)\\
%&=&\frac{\widetilde{\pi}^{q-1}\zeta_C(1)}{\theta-\theta^q}-\frac{\zeta_C(1)^q}{\theta-\theta^q}\\
&=&\zeta_C(1)\left(\zeta_C(q-1)-\frac{\zeta_C(1)^{q-1}}{\theta-\theta^q}\right).
\end{eqnarray*}

\begin{Remark}
{\em In the examples we have studied above, the semi-characters are all of Dirichlet type but 
for no reason this should be considered as a necessary condition for the existence of shuffle-like formulas.
For example, if $\nu:A^+\rightarrow\FF_q[t]$ is the semi-character which which associates $a\in A^+$
to $t^{\deg_\theta(a)}$ (this is not of Dirichlet type), then the following formula holds in the 
Tate algebra $\TT$, as the 
reader can easily check:
$$\zeta_C(1;\nu)\zeta_C(1;\boldsymbol{1})=\zeta_C(2;\nu)+\zeta_C\left(\begin{matrix}\nu & \boldsymbol{1} \\
1 & 1 \end{matrix}\right)+\zeta_C\left(\begin{matrix}\boldsymbol{1} & \nu \\
1 & 1 \end{matrix}\right).$$ Evaluating at $t=1$ we deduce the formula (1) of Theorem \ref{formulas}.}
\end{Remark}

\subsubsection*{Acknowledgements} The author is thankful to Bruno Angl\`es for fruitful discussions, to David Goss for having suggested 
some references, including \cite{THO} and \cite{GOS?}, and to Dinesh Thakur for 
having pointed out several inaccuracies in a previous version of the present note.

\end{document}